\begin{document}
\providecommand{\keywords}[1]{\textbf{\textit{Keywords: }} #1}
\newtheorem{theorem}{Theorem}[section]
\newtheorem{lemma}[theorem]{Lemma}
\newtheorem{proposition}[theorem]{Proposition}
\newtheorem{corollary}[theorem]{Corollary}
\theoremstyle{definition}
\newtheorem{definition}{Definition}[section]
\theoremstyle{remark}
\newtheorem{remark}{Remark}
\newtheorem{conjecture}{Conjecture}
\newtheorem{question}{Question}

\def\p{\mathfrak{p}}
\def\q{\mathfrak{q}}
\def\s{\mathfrak{S}}
\def\Gal{\mathrm{Gal}}
\def\Ker{\mathrm{Ker}}
\def\Coker{\mathrm{Coker}}
\newcommand{\cc}{{\mathbb{C}}}   
\newcommand{\ff}{{\mathbb{F}}}  
\newcommand{\nn}{{\mathbb{N}}}   
\newcommand{\qq}{{\mathbb{Q}}}  
\newcommand{\rr}{{\mathbb{R}}}   
\newcommand{\zz}{{\mathbb{Z}}}

\title{Quadratic number fields with unramified $SL_2(5)$-extensions}
\author{Joachim K\"onig}
\address{Korea National University of Education, Department of Mathematics Education, 28173 Cheongju, South Korea}
\begin{abstract} Continuing the line of thought of an earlier work \cite{Koe21}, we provide the first infinite family of quadratic number fields with everywhere unramified Galois extensions of Galois group $SL_2(5)$, the (unique) smallest nonsolvable group for which this problem was previously open. Our approach also improves upon \cite{Koe21} by yielding the first infinite family of {\it real}-quadratic fields possessing an unramified Galois extension whose Galois group is perfect and not generated by involutions. Our result also amounts to a new existence result on quintic number fields with squarefree discriminant and additional local conditions.
 \end{abstract}
\maketitle
\section{Introduction and main result}

%{\bf Motivation: Simple groups are generated by involutions and thus in principle accessible directly, via Abhyankar's lemma. Next step: Quasi-simple groups. These are generated by involutions unless the simple component has generalized quaternion $2$-Sylow group. This list is known by Gorenstein,Walter 1965 and is just $PSL_2(q)$ with $q\ge 5$ odd, or $A_7$. Among all quasi-simple groups $2.S$, covering groups of these stand out as ``difficult" since not directly accessible via Abhyankar's lemma.}

This work continues the investigations of \cite{Koe21}, motivated by the following folklore conjecture in inverse Galois theory.
\begin{conjecture}
\label{conj:1}
For any finite group $G$, there exist infinitely many quadratic number fields $K$ such that $K$ possesses a Galois extension with Galois group $G$ unramified at all (finite and infinite) primes of $K$. 
\end{conjecture}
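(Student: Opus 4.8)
The plan is to reduce the everywhere-unramified realization of $G$ over a quadratic base to a \emph{ramification-controlled} realization of $G$ over $\qq$ itself, via the standard discriminant-matching mechanism. Concretely, I would seek a Galois extension $N/\qq$ with $\Gal(N/\qq)\cong G$ that is tamely ramified and whose inertia group at every ramified prime is generated by an \emph{involution}. Writing $d$ for the squarefree product of the rational primes ramifying in $N$ and setting $K=\qq(\sqrt{d})$, the crucial observation is the following: if $N\cap K=\qq$, then $\Gal(NK/\qq)\cong G\times\Gal(K/\qq)$, and at a prime $p$ ramifying in both $N$ and $K$ the inertia in $NK/\qq$ is generated by a single element $(\tau_p,\sigma)$ with $\tau_p$ an involution and $\sigma$ the nontrivial element of $\Gal(K/\qq)$. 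Since $(\tau_p,\sigma)^2=1$, this cyclic inertia group meets $\Gal(NK/K)\cong G\times\{1\}$ trivially, so $NK/K$ is unramified at every prime above $p$. Hence $NK/K$ would be an everywhere-unramified $G$-extension of the quadratic field $K$.

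The construction of such an $N$ is most naturally carried out by Hilbert specialization. I would start from a regular Galois extension of $\qq(t)$ with group $G$ whose branch-cycle description consists of involutions (together with whatever remaining inertia generators are needed to satisfy the product-one relation), and specialize $t$ so as to retain the full group $G$ by Hilbert irreducibility while forcing every finite place of bad reduction to contribute only tame, order-$2$ inertia and a squarefree discriminant. Two local issues require care. First, the small primes $2$ and $3$ (and any prime dividing $|G|$) demand a tame, carefully chosen specialization, which amounts to delicate congruence bookkeeping at those primes. Second, the archimedean places: if one only asks for imaginary-quadratic $K$ the condition at infinity is automatic, since the unique complex place never ramifies; but to obtain \emph{real}-quadratic $K$ one must additionally arrange that $N$ (hence $NK$) be totally real, so that no real place of $K$ becomes complex and introduces archimedean ramification. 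This is exactly the extra constraint that makes the real-quadratic case harder, as emphasized in the abstract.

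To produce \emph{infinitely many} distinct quadratic fields, I would vary the specialization parameter through an infinite family and verify, by the usual Hilbert-irreducibility-with-controlled-ramification arguments, that infinitely many pairwise non-isomorphic discriminant fields $K=\qq(\sqrt{d})$ arise, each carrying its own unramified $G$-extension $NK/K$. Bounding discriminants and tracking the ramified prime sets shows the $K$ cannot all coincide, which yields the infinitude required in the statement.

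The genuine obstacle, and the reason this is stated as a conjecture rather than a theorem, is that the mechanism above \emph{forces $G$ to be generated by involutions}: since $\qq$ admits no unramified extensions, the inertia generators over all ramified primes must generate $G$, and if they are all involutions then so is $G$. For a perfect group $G$ that is not generated by involutions---with $SL_2(5)$ the smallest such case, its unique involution being the central $-I$---no $N/\qq$ can have purely involution inertia, and the direct construction collapses. The way forward, which is precisely the content of the present paper, is to replace the single-step construction by an embedding-problem approach: realize an involution-generated quotient (here $A_5\cong PSL_2(5)$) unramified over $K$ by the mechanism above, and then solve the central embedding problem $SL_2(5)\twoheadrightarrow A_5$ over $K$ without new ramification, letting the quadratic field $K$ itself absorb the order-$2$ obstruction class. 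I expect this embedding step---simultaneously controlling the local obstructions (the relevant quaternion, or Hasse--Witt, invariants) at the bad primes and the global squarefree-discriminant condition---to be the hard part, and I do not expect a single uniform argument of this shape to cover \emph{every} finite group $G$, which is why the general conjecture remains open.
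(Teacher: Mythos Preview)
The statement is a \emph{conjecture}, and the paper does not prove it; it is stated as open folklore motivating the work, with only the special case $G=SL_2(5)$ established (Theorem~\ref{thm:main}). You correctly recognize this and do not actually claim a proof: your proposal is a heuristic discussion of the standard discriminant-matching mechanism, together with a diagnosis of why it fails for groups not generated by involutions. That diagnosis is accurate, and your closing sentence (``which is why the general conjecture remains open'') is the right conclusion.

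Where your sketch does depart from the paper is in the outline of the $SL_2(5)$ step. You propose to realize $A_5$ itself over $\qq$ with involution (double-transposition) inertia, adjoin an \emph{external} quadratic field $K=\qq(\sqrt{d})$ disjoint from $N$, and then solve the embedding problem $SL_2(5)\twoheadrightarrow A_5$ over $K$. The paper instead works one level up: it realizes $S_5\cong PGL_2(5)$ over $\qq$ with all inertia generated by \emph{transpositions} and with cyclic decomposition groups (Lemma~\ref{lem:sqf-loccyc}), so that $K$ is the discriminant subfield sitting \emph{inside} the $S_5$-extension $\Omega$, and the embedding problem solved is the $\qq$-problem $2.S_5\twoheadrightarrow S_5$ rather than the $K$-problem $SL_2(5)\twoheadrightarrow A_5$. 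This matters: transpositions of $S_5$ are split in $2.S_5$, so Proposition~\ref{prop:serre} immediately yields a solution with no new ramification, whereas in your setup the relevant involutions are double transpositions of $A_5$, which lift to elements of order $4$ in $SL_2(5)$, making the local obstruction and ramification bookkeeping less transparent. Your high-level picture is right, but the paper's choice to pass through the index-$2$ overgroup is not incidental---it is what makes the embedding step clean.
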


Stronger versions of Conjecture \ref{conj:1} also take into account the asymptotic distribution of the number of unramified extensions (when the quadratic fields are counted by discriminant), generalizing the Cohen-Lenstra heuristics; see in particular \cite{Wood}.
Conjecture \ref{conj:1} is known to be true only in some special cases, including the case of cyclic groups (where Conjecture \ref{conj:1}  relates directly to the ``class number divisibility problem" of class field theory, cf.\ \cite{AC55}), symmetric and alternating groups (e.g., \cite{Yam70} (which does not consider ramification at infinity) or \cite{Ked} (which does)), and a few other almost simple finite groups (e.g., \cite{KNS}). Groups such as $G=A_n$, and more generally, perfect groups $G$ (that is, groups with trivial abelianization), are in some way on the opposite side of abelian groups, and investigating Conjecture \ref{conj:1} for them may be seen as complementary to the class number divisibility problem. While the above nonsolvable examples share the property that they are generated by involutions, \cite{Koe21} obtained a solution for the first cases of perfect groups not generated by involutions (a property making certain direct approaches via, e.g., Abhyankar's lemma impossible), including the group $SL_2(7)$. %The common idea for the nonsolvable cases is to carefully specialize suitable function field extensions over $\mathbb{Q}$ (or equivalently, multiparameter polynomials over $\mathbb{Q}$) with prescribed Galois group to obtain extensions of $\mathbb{Q}$ with a desired ramification behavior. 

In this paper, we solve the smallest open non-solvable case of the conjecture, namely:

%Note: Smallest non-simple perfect groups: SL_2(5), SL_2(7), SL_(2,9)
%Among simple non-alternating group, only extra composition factor is PSL_2(8)
%Next one would be 2^4.A_5
\begin{theorem}
\label{thm:main}
There exist infinitely many imaginary-quadratic and infinitely many real-quadratic number fields possessing an everywhere (i.e., including the archimedean primes) unramified Galois extension with Galois group $SL_2(5)$.%
%\marginpar{Full covering group $6.A_7$ should also be fine, since in proof, all inertia groups are generated by involutions!} 
%
%Maybe can see this theoretically at least for q=7, from tuple (2,6,7)?
\end{theorem}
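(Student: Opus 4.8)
The plan is to produce the required $SL_2(5)=\widetilde{A_5}$-extensions by the classical ``ascend to a quadratic field, then solve an embedding problem'' strategy, made effective through an explicitly reverse-engineered family of quintic polynomials. Fix a suitable rational number $a$ and consider $f_t(x)=x^{3}(x-1)^{2}-t\,(x-a)\in\qq(t)[x]$. Its branch cycle description over $\qq(t)$ realizes the cycle-type pattern $(3.2,\,4.1,\,2,\,2)$, from which one reads off that the (geometric and arithmetic) monodromy over $\qq(t)$ is all of $S_5$ — a transitive, hence primitive, subgroup of $S_5$ containing a transposition — so that by Hilbert irreducibility most specializations have Galois group $S_5$ over $\qq$, and that $\mathrm{disc}_x(f_t)$ is, up to a square and a constant, a quadratic polynomial in $t$. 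Its two roots $t_0,t_0'$ generate over $\qq(a)$ the quadratic field $\qq\bigl(\sqrt{a^{2}-\tfrac{28}{25}a+\tfrac{4}{25}}\bigr)$, and the degenerate member $f_{t_0}$ splits off a cubic factor $h$. The parameter $a$ is still free; it is used to force the arithmetic conditions needed in the embedding-problem step, which cut out an auxiliary curve that one checks (genus computation / explicit parametrization) has infinitely many rational points — the value $a=\tfrac19$ being one admissible choice.

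Next I would specialize $t$ to values of the shape $c^{3}s^{2}$, with $c$ drawn from a finite, $a$-dependent set and $s\in\qq$. For infinitely many of these the field $E:=\qq[x]/(f_t)$ has Galois group $S_5$ and field discriminant $d$ whose squarefree part grows and which is squarefree outside a \emph{fixed} set of bad primes; guaranteeing this simultaneously is a one-variable squarefree-sieve statement for the values of a fixed separable polynomial, which must be made compatible with finitely many congruence conditions at $2$, $3$ and especially $5$. The prime $5$ needs care: the quintic could become ramified there with a $4$-cycle or a $3.2$-element of inertia — there is no $5$-cycle in the branch pattern — either of which would put $5^{3}$ into $d$; restricting $t$ modulo a power of $5$ forces $5$ to be either unramified in $E$ or ramified through a single transposition, both harmless, and the conditions at $2,3$ simply keep $d$ coprime to $6$. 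Because $d$ is squarefree away from the bad set, the inertia at every ramified prime of $E/\qq$ is generated by a transposition, so over $K:=\qq(\sqrt d)$ — which is ramified at exactly those primes — the Galois closure $N$ becomes everywhere unramified with $\Gal(N/K)=A_5$. Arranging $f_t$ to have $5$ real roots makes $d>0$, so $K$ is real-quadratic and $N$ is totally real (hence $N/K$ is unramified also at the infinite places); arranging $3$ real roots makes $d<0$ and gives the imaginary-quadratic case.

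The final step is the lift through the central extension $1\to\zz/2\to SL_2(5)\to A_5\to1$ by an everywhere-unramified extension of $K$. The obstruction lies in $\mathrm{Br}(K)[2]=H^{2}(K,\zz/2)$ and, by Serre's trace-form theorem, equals, up to a fixed correction class, the Hasse--Witt invariant of the trace form of $E\otimes_{\qq}K$; here the discriminant term is absent since $d$ is already a square in $K$. The decisive point is that at every odd prime $\p$ of $K$ over a prime $p\mid d$ the algebra $E\otimes_{\qq}K_\p$ is \emph{unramified}: the ramified quadratic factor of $E\otimes\qq_p$, having discriminant $\equiv d$ times a unit, becomes split or unramified after the ramified base change $K_\p/\qq_p$. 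Hence the trace form is unimodular there and contributes nothing, and likewise at the (unramified, odd) prime $3$, so the obstruction is supported on the fixed finite set consisting of $2$, the primes dividing $a$ and $c$, and the archimedean places. The conditions on $a$ and $c$ — conveniently phrased through the cubic $h$, namely that $t_0\cdot\mathrm{disc}(h)$ be a prescribed rational multiple of a square in $\qq\bigl(\sqrt{a^{2}-\tfrac{28}{25}a+\tfrac{4}{25}}\bigr)$ — are exactly what makes these remaining local invariants cancel, so the obstruction vanishes. The solution set of the embedding problem is then a torsor under $K^{\times}/(K^{\times})^{2}$, and a final twist by a suitable $c_0\in K^{\times}$, produced by weak approximation together with a Chebotarev argument meeting finitely many local ``unramified'' conditions (and, in the real-quadratic case, total positivity at the archimedean places, which keeps the lift totally real), yields an everywhere-unramified $SL_2(5)$-extension of $K$. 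Ranging over the infinitely many admissible $(a,t)$ of each signature proves the theorem.

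I expect the principal difficulty to be exactly this last step in its everywhere-unramified, signature-controlled form: not merely verifying that Serre's obstruction vanishes — which is the reason the family must be built so delicately, and why the kernel prime $2$ and the arithmetically fragile prime $5$ demand separate local bookkeeping — but simultaneously realizing the prescribed quadratic twist with no added ramification while, in the real case, imposing total positivity at infinity. That positivity constraint is precisely what was absent in \cite{Koe21} and is the new input here. A subsidiary technical obstacle is making the squarefree-discriminant sieve coexist with the congruence conditions at $2,3,5$ that control the local behaviour of the quintic, and ensuring the resulting quadratic fields are pairwise distinct (which follows from the growth of the squarefree part of $d$).
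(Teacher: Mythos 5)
Your construction matches the paper's almost exactly (the same quintic family $x^3(x-1)^2-t(x-a)$, the same quadratic pullback and the same ``swallowing'' condition on $t_0\cdot\mathrm{disc}(h)$), but you solve a different embedding problem: you lift $A_5$ to $SL_2(5)$ directly over $K=\qq(\sqrt d)$, whereas the paper lifts $S_5$ to the stem cover $2.S_5$ over $\qq$ and only then restricts to $K$. Your ``decisive point'' at the odd primes dividing $d$ is correct (once $N/K$ is unramified at a finite place, the decomposition group is cyclic, its preimage in $SL_2(5)$ is cyclic, so the local problem has an unramified solution); this is in fact simpler than the paper's route, which needs the stronger ``locally cyclic over $\qq$'' condition and hence the whole $\equiv 1\bmod 3$ analysis. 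However, two genuine gaps remain. First, and most importantly, the real-quadratic case -- the new content of the theorem -- is not proved. Serre's proposition (as used here and in the paper) only matches prescribed inertia at \emph{finite} primes; the solution it produces may send complex conjugation to the central involution. To repair this you need a quadratic character of $K$ that is unramified at \emph{all} finite places and nontrivial at the real places, i.e.\ a totally negative twist generating a finite-unramified extension. Weak approximation plus Chebotarev cannot produce it: any element forced to approximate local conditions will acquire new prime divisors and the twist then ramifies there, and whether such a character exists at all is a narrow-versus-ordinary class group condition on $K$ (moreover, since your $L/K$ is not Galois over $\qq$, the two real places need not even behave symmetrically). ``Total positivity'' addresses the wrong problem -- it preserves an already totally real lift but cannot fix a lift ramified at infinity. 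The paper engineers precisely this missing input: the factor $(-50w_1^2+27w_2^2)(10w_1^2+8w_1w_2+w_2^2)$ is positive and $\equiv 3 \bmod 4$ in the relevant range, so some prime $p\equiv 3\bmod 4$ ramifies in $k$, and then $k(\sqrt{-p})/k$ is ramified only at the archimedean places; composing with $L$ and passing to the quotient by the diagonal central $C_2$ kills the archimedean ramification without touching finite primes.

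Second, your control of the ``bad'' primes is asserted rather than proved. The claim that the residual local invariants at $2$, at the divisors of $a$ and $c$, and at infinity cancel ``because of the conditions on $a$ and $c$'' is a misattribution: the swallowing condition was designed to shrink the decomposition groups at the transposition branch points, and it says nothing about the exceptional primes; those require separate local computations (the paper's Step~2: explicit congruence arguments at $2$, the behaviour of the residue field $\qq(\zeta_3,\sqrt{c/2})$ at $3$ and at divisors of $c/2$, etc.), combined with the Beckmann/K\"onig--Legrand--Neftin specialization theorems to transfer branch-point data to all but finitely many primes. Likewise, the ``one-variable squarefree sieve'' you invoke to get squarefree discriminant away from a fixed set is both unavailable unconditionally for polynomials of this degree and unnecessary: choosing $s_0=u^3/v^2$ makes every prime meeting the branch points $s=0$ and $s=\infty$ (inertia of orders $3$ and $2$ after pullback) unramified, so transposition inertia at all non-exceptional primes comes for free from the specialization machinery, with no sieve. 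With these two repairs -- the archimedean twist via a ramified prime $\equiv 3\bmod4$, and honest bookkeeping at the exceptional primes -- your over-$K$ variant would give a correct (and at the finite places somewhat leaner) proof; as written, the real-quadratic assertion and the exceptional-prime analysis are not established.
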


We will see that the proof of this result about $G=SL_2(5)$ is significantly more intricate than the one of the (known) analogous result for $A_5 (\cong PSL_2(5))$; progressing from one to the other corresponds to a central embedding problem with kernel $C_2$ and certain extra conditions; those conditions however make the solution rather nontrivial. On a similar note, while $A_5$ has been known to be a Galois group over $\mathbb{Q}$ since the 19th century, $SL_2(5)$ was first shown to be realizable by Sonn \cite{Sonn} in 1980.

Note also that while the proof of Theorem \ref{thm:main} shares some common theoretical ideas with the previously obtained results in \cite{Koe21} for the two perfect groups $G=SL_2(7)$ and $G=2.A_7$, it requires a significant amount of extra computational effort compared to these.
On the other hand, the assertion on real-quadratic number fields in Theorem \ref{thm:main} goes beyond what was obtained for the groups considered in \cite{Koe21}; indeed, the construction there automatically lead only to imaginary-quadratic fields.  We will elaborate in the beginning of Section \ref{sec:proof} on the additional difficulty of the case $G=SL_2(5)$ compared to the previously solved cases. 
 In Corollary \ref{cor:q5}, we give a fully explicit family of quadratic number fields fulfilling the assertion of Theorem \ref{thm:main}. 
 This also yields a lower bound on the asymptotic distribution of such quadratic fields, although obviously one far away from the actual expectation of the generalized Cohen-Lenstra heuristics. Magma code for the verification of several claims about computations is contained in the ancillary file \texttt{sl25\_unram\_anc.txt}.
 
 Finally, the reader may compare works such as \cite{Maire} and \cite{Brink}, with which our result shares the common idea of ``building on top" of an initial nonsolvable unramified extension (although with the notable difference that those papers are not focussed on realizing an extension with a particular Galois group, and instead aim at the construction of {\it infinite} towers).

\section{Prerequisites}
We recall some key results on specializations of function field extensions and on central embedding problems. This section is largely identical with the analogous section in  \cite{Koe21}, and may be skipped by a reader familiar with the techniques.

\subsection{Local behavior of specializations of function field extensions}
For a finite Galois extension $N/K(t)$ of function fields and any value $t_0\in \mathbb{P}^1(K)$, we denote by $N_{t_0}/K$ the specialization of $N/K(t)$ at $t\mapsto t_0$, i.e., the residue extension at any place extending $t\mapsto t_0$ in $N$. Recall that $N/K(t)$ is called $K$-regular if $K$ is algebraically closed in $N$. 
The following well-known theorem, cf.\ \cite[Prop.\ 4.2]{Beck}, ties the ramification in specializations of a function field extension to the inertia groups at branch points of that function field extension. 

%
% Check whether really necessary for this paper!!
%
\begin{theorem}
\label{thm:beck}
Let $K$ be a number field and $N/K(t)$ a finite $K$-regular Galois extension with Galois group $G$.
Then there exists a finite set $\mathcal{S}_0$ of primes, depending only on $N/K(t)$, such that the following holds for every prime $\mathfrak{p}$ of $K$ not in $\mathcal{S}_0$:\\
If $t_0\in K$ is not a branch point of $N/K(t)$ then the following condition is necessary for $\mathfrak{p}$ to be ramified in the specialization $N_{t_0}/K$:
 $$e_i:=I_{\mathfrak{p}}(t_0, t_i)>0 \text{ for some (automatically unique, up to algebraic conjugates) branch point $t_i$.}$$
 Here $I_{\mathfrak{p}}(t_0,t_i)$ is the intersection multiplicity of $t_0$ and $t_i$ at the prime $\mathfrak{p}$.
Furthermore, the inertia group of a prime extending $\mathfrak{p}$ in the specialization $N_{t_0}/K$ is then conjugate in $G$ to $\langle\tau^{e_i}\rangle$, where $\tau$ is a generator of an inertia subgroup over the branch point $t\mapsto t_i$ of $K(t)$.
\end{theorem}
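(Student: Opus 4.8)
The plan is to descend the cover of $\mathbb{P}^1$ corresponding to $N/K(t)$ to an integral model over a ring of $S$-integers of $K$, where $S$ is chosen large enough (this $S$ will be $\mathcal{S}_0$) that the model is as well-behaved as possible, and then to read off the ramification in the fibre over $t_0$ from the local geometry of this model near the branch locus. Write $\phi\colon X\to\mathbb{P}^1_{\mathcal{O}_{K,S}}$ for the normalization of $\mathbb{P}^1_{\mathcal{O}_{K,S}}$ in $N$, and let $\mathcal{D}\subset\mathbb{P}^1_{\mathcal{O}_{K,S}}$ be the closure of the branch divisor $\{t_1,\dots,t_r\}$. By standard spreading-out, after enlarging $S$ — and, crucially, in a way depending only on $N/K(t)$, since each of the following is a condition on the cover alone — we may arrange that: (i) $|G|$ is invertible on $\mathcal{O}_{K,S}$, so all ramification occurring below is tame; (ii) $\phi$ is finite and étale over $\mathbb{P}^1_{\mathcal{O}_{K,S}}\setminus\mathcal{D}$; (iii) $\mathcal{D}\to\mathrm{Spec}\,\mathcal{O}_{K,S}$ is finite étale, so the closures of distinct branch points are pairwise disjoint. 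Condition (iii) is exactly what makes the branch point $t_i$ in the statement unique (up to conjugates): for $\mathfrak{p}\notin\mathcal{S}_0$, the value $t_0$ can reduce to at most one point of $\mathcal{D}$ modulo $\mathfrak{p}$.

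Fix $\mathfrak{p}\notin\mathcal{S}_0$. The value $t_0\in K$ defines a section $\sigma\colon\mathrm{Spec}\,\mathcal{O}_{K,\mathfrak{p}}\to\mathbb{P}^1_{\mathcal{O}_{K,\mathfrak{p}}}$, and the base change $\sigma^\ast X$ has generic fibre a disjoint union of copies of $\mathrm{Spec}\,N_{t_0}$ (using that $N/K(t)$ is $K$-regular and Galois). If $I_\mathfrak{p}(t_0,t_i)=0$ for every $i$, then $\sigma$ misses $\mathcal{D}$ over $\mathfrak{p}$, so by (ii) the scheme $\sigma^\ast X$ is finite étale over $\mathcal{O}_{K,\mathfrak{p}}$, hence $\mathfrak{p}$ is unramified in $N_{t_0}$; this is the asserted necessity.

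Now suppose $e_i:=I_\mathfrak{p}(t_0,t_i)>0$ for the (unique) branch point $t_i$; after a coordinate change we may take $t_i$ finite, and $e_i>0$ then means $t_0-t_i$ has $\mathfrak{p}$-adic valuation exactly $e_i$, i.e.\ $t_0-t_i=u\pi^{e_i}$ for a uniformizer $\pi$ of $\mathfrak{p}$ and a unit $u$ (read at a prime above $\mathfrak{p}$ in the field of definition of $t_i$, which is unramified over $\mathfrak{p}$ once $\mathcal{S}_0$ is large enough). By tameness, the completion of $N/K(t)$ at $t\mapsto t_i$ is, over its inertia field, the Kummer extension generated by $(t-t_i)^{1/e}$, where $e=\mathrm{ord}(\tau)$ is the order of the inertia group $\langle\tau\rangle$ over $t\mapsto t_i$ and $\tau$ acts by $(t-t_i)^{1/e}\mapsto\zeta_e\cdot(t-t_i)^{1/e}$ for a primitive $e$-th root of unity $\zeta_e$; moreover the place of $N_{t_0}$ above $\mathfrak{p}$ sees exactly this local picture, specialized via $t-t_i\mapsto u\pi^{e_i}$. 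Since $u$ is a unit and $e$ is prime to the residue characteristic, adjoining $u^{1/e}$ is unramified, while adjoining $\pi^{e_i/e}=\pi^{e_i'/e'}$, with $e'=e/\gcd(e,e_i)$ and $e_i'=e_i/\gcd(e,e_i)$ coprime, yields a totally tamely ramified extension of degree $e'$ (as $\pi^{1/e'}$ lies in it). Tracking the action of $\tau$, the inertia subgroup of $\langle\tau\rangle$ thereby obtained is its unique subgroup of order $e'$, namely $\langle\tau^{\gcd(e,e_i)}\rangle=\langle\tau^{e_i}\rangle$; this is well-defined up to conjugacy in $G$ because $\langle\tau^{e_i}\rangle$ is characteristic in $\langle\tau\rangle$ and $\langle\tau\rangle$ is determined up to conjugacy.

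The spreading-out in the first paragraph is routine. The part I expect to require the most care is the third step: pinning down the local model over $K_\mathfrak{p}$ when $t_i$ is not $\mathfrak{p}$-adically rational, so that the genuinely ramified part of the local extension must be separated from the residue-field extension, and justifying that ``the place of $N_{t_0}$ above $\mathfrak{p}$ sees exactly the local picture of the cover at $t_i$'' — i.e.\ comparing the henselization of $\sigma^\ast X$ at the closed point of its special fibre lying over the reduction $\bar t_i$ with the cover near $t_i$. Once that comparison is in place, the Kummer-theoretic computation of inertia and its identification with $\langle\tau^{e_i}\rangle$ are immediate.
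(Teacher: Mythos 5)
The paper does not prove this statement at all: it is quoted as a known result, with a citation to Beckmann (\cite{Beck}, Prop.\ 4.2; see also \cite{KLN} for the refinement on inertia generators), so there is no in-paper argument to measure you against. Your proposal is, in outline, the standard geometric proof of Beckmann's theorem: spread out to a tame, étale-away-from-$\mathcal{D}$ model over $\mathcal{O}_{K,S}$ with the horizontal branch divisor $\mathcal{D}$ finite étale over the base, observe that a section missing $\mathcal{D}$ over $\mathfrak{p}$ pulls back to an étale algebra (the necessity direction, which you have essentially complete), and when the section meets $\mathcal{D}$ with multiplicity $e_i$, compute the inertia by Kummer theory to get $\langle\tau^{e_i}\rangle$, i.e.\ the subgroup of $\langle\tau\rangle$ of order $e/\gcd(e,e_i)$. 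This matches Beckmann's statement, including the uniqueness of $t_i$ coming from the disjointness of the reductions of distinct branch points, and your remark that one must read $t_0-t_i$ at a prime of $K(t_i)$ above $\mathfrak{p}$ is the right way to reconcile this with the paper's elementary definition of $I_{\mathfrak{p}}$.

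The one point that is genuinely more than routine is exactly the one you flag but do not carry out: the claim that ``the place of $N_{t_0}$ above $\mathfrak{p}$ sees exactly the local picture of the cover at $t_i$, specialized via $t-t_i\mapsto u\pi^{e_i}$.'' The Kummer description of the completion of $N$ at the place $t\mapsto t_i$ of $K(t)$ is a one-dimensional (generic-fibre) statement; what you need is a statement over the two-dimensional henselian (or complete) local ring of $\mathbb{P}^1_{\mathcal{O}_{K,S}}$ at the closed point where the section meets $\mathcal{D}$. This is supplied by the tame local structure theorem (Abhyankar's lemma in the form of Grothendieck--Murre, or SGA~1): since over that local ring the cover is tame and étale outside the regular horizontal divisor $\mathcal{D}$, it is étale-locally of the form $z^{e}=(t-t_i)\cdot(\text{unit})$, and pulling back along the section then gives your computation verbatim, including the identification of the resulting inertia with $\langle\tau^{e_i}\rangle$ up to conjugacy. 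So the architecture is correct and the conclusion is the right one, but as written the argument has a hole precisely at this comparison; to make it a proof you must either invoke that structure theorem explicitly or redo the Kummer analysis by hand over the complete local ring $\mathcal{O}_{K(t_i),\mathfrak{P}}[[t-t_i]]$, which is what Beckmann in effect does.
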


Regarding the definition of intersection multiplicity $I_{\mathfrak{p}}$ occurring in Theorem \ref{thm:beck}, note that in the special case $K=\mathbb{Q}$ this may be defined conveniently in the following way: Let $f(X)\in \zz[X]$ be the irreducible polynomial of $t_i$ over $\zz$ and $\tilde{f}(X,Y)$ its homogenization (with $\tilde{f}:=Y$ in the special case $t_i=\infty)$. Let $t_0=\frac{a}{b}$ with $a,b\in \zz$ coprime, and let $p$ be a prime number. Then $I_p(t_0,t_i)$ is the multiplicity of $p$ in $\tilde{f}(a,b)$.

The following result (cf.\ \cite[Thm.\ 4.1]{KLN}) clarifies the structure of decomposition groups in specializations.
To state it, we need to introduce a bit more notation. Given a finite Galois extension $N/K(t)$ with group $G$ and a $K$-rational place $t\mapsto a\in \mathbb{P}^1(K)$, denote by $D_a$ and $I_a$ the decomposition and inertia group at the place $t\mapsto a$ respectively. Furthermore, given a prime $p$ of $K$, denote by $D_{a,p}$ the decomposition group at $p$ in the residue extension $N_a/K$. Note that the Galois group of $N_a/K$ is identified with $D_a/I_a$, and hence $D_{a,p}$ is identified with a subgroup of this group.

\begin{theorem}
\label{thm:kln}
%(Theorem on local behaviour of specializations, from \cite{KLN})
Let $K$ be a number field and $N/K(t)$ a finite $K$-regular Galois extension with Galois group $G$.
Let $t\mapsto t_i \in \mathbb{P}^1(\overline{K})$ be a branch point of $N/K(t)$, and let $I_{t_i}$ and $D_{t_i}$ denote the inertia and decomposition group at $t\mapsto t_i$ in $N(t_i)/K(t_i)(t)$.
Then there exists a finite set $\mathcal{S}_1$ of primes of $K$, depending only on $N/K(t)$ and containing the set $\mathcal{S}_0$ from Theorem \ref{thm:beck}, such that for all primes $\mathfrak{p}$ of $K$ not in $\mathcal{S}_1$ and all non-branch points $a\in \mathbb{P}^1(K)$ of $N/K(t)$, the following hold:

\begin{itemize}
\item[a)] Assume that $\mathfrak{p}$ ramifies in $N_a/K$ and $I_{\mathfrak{p}}(a,t_i)>0$.\footnote{The latter is guaranteed to happen for some branch point $t_i$, unique up to algebraic conjugates, by Theorem \ref{thm:beck}.} 
Let $D_{t_i,\mathfrak{p}'}$ denote the decomposition group of $N(t_i)_{t_i}/K(t_i)$ at the (unique) %\footnote{Here as well, uniqueness can be forced up to suitably increasing the exceptional set $\mathcal{S}_1$, if necessary.} 
 prime $\mathfrak{p}'$ of $K(t_i)$ extending $\mathfrak{p} $ such that $I_{\mathfrak{p}'}(a,t_i)>0$. Then the decomposition group $D_{a,\mathfrak{p}}$ at $\mathfrak{p}$ in $N_a/K$ is identified (up to conjugacy in $G$) with a subgroup of $D_{t_i}$, and fulfills $\varphi(D_{a,\mathfrak{p}}) = D_{t_i,\mathfrak{p}'}$, where $\varphi:D_{t_i}\to D_{t_i}/I_{t_i}$ is the canonical epimorphism.
\item[b)] In particular, if additionally $I_{\mathfrak{p}}(a,t_i) = 1$, then $D_{a,\mathfrak{p}} = \varphi^{-1}(D_{t_i,\mathfrak{p}'})$.
\end{itemize}
\end{theorem}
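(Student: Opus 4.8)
The plan is to prove this statement, which is local at $\mathfrak p$, by reducing it to an explicit integral description of the cover in a formal neighbourhood of the branch point $t\mapsto t_i$. First I would fix a good model: enlarge $\mathcal S_0$ to a finite set $\mathcal S_1$ of primes of $K$ so that for every $\mathfrak p\notin\mathcal S_1$ one has (i) the normalisation $Y$ of $\mathbb P^1_{\mathcal O_{K,\mathcal S_1}}$ in $N$ is finite, its formation commutes with base change, and $Y\to\mathbb P^1$ is finite étale away from the horizontal branch divisor; (ii) every ramification index of $N/K(t)$ is prime to the residue characteristic of $\mathfrak p$ (a finite condition, since these indices divide $|G|$); and (iii) the branch points of $N/K(t)$ extend to pairwise disjoint sections that become étale over the relevant constant field extensions, so that modulo each $\mathfrak p\notin\mathcal S_1$ the branch divisor stays reduced with distinct geometric points. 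The existence of such an $\mathcal S_1$ — which contains, and is constructed along the same lines as, the set $\mathcal S_0$ of Theorem~\ref{thm:beck} — is a standard spreading-out argument.

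Next, fix $\mathfrak p\notin\mathcal S_1$ and a non-branch point $a\in\mathbb P^1(K)$ with $I_{\mathfrak p}(a,t_i)>0$. Using (iii), I would show that the reduction of $a$ modulo $\mathfrak p$ meets the reduced branch divisor of $t_i$ in exactly one geometric point, which pins down the unique prime $\mathfrak p'$ of $K(t_i)$ over $\mathfrak p$ with $I_{\mathfrak p'}(a,t_i)>0$; at all other primes of $K(t_i)$ above $\mathfrak p$ the point $a$ avoids the branch locus and contributes no ramification. Working over $R:=\widehat{\mathcal O}_{K(t_i)_{\mathfrak p'}}$ with $u$ a local parameter of $\mathbb P^1$ at the now $K(t_i)$-rational point $t_i$, tameness together with good reduction yields the standard Kummer description of the cover over $R[[u]]$: above the component stabilised by $D_{t_i}$ it is $R'[[v]]$ with $v^{e}=(\mathrm{unit})\cdot u$ for $e=|I_{t_i}|$, where $R'/R$ is unramified with residue extension $N(t_i)_{t_i}/K(t_i)$ carrying the action of $D_{t_i}/I_{t_i}$, and the remaining components realise the coset space $G/D_{t_i}$.

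It then remains to substitute $u\mapsto u_0:=a-t_i$, whose valuation $v_{\mathfrak p'}(u_0)$ equals $I_{\mathfrak p'}(a,t_i)=:e_i>0$. The factor of $N_a\otimes_K K_{\mathfrak p}$ sitting over $\mathfrak p'$ then becomes $R'[v]/(v^{e}-(\mathrm{unit})\,\pi^{e_i})$ with $\pi$ a uniformiser of $R$, a tame extension whose inertia group is generated by $\tau^{e_i}$ for $\tau$ a generator of $I_{t_i}$ (recovering the inertia assertion of Theorem~\ref{thm:beck}) and whose maximal unramified subextension is $R'$, with Frobenius acting through $D_{t_i,\mathfrak p'}$. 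Picking a prime of $N_a$ above $\mathfrak p$ inside the $D_{t_i}$-component, its decomposition group $D_{a,\mathfrak p}$ is therefore contained in $D_{t_i}$ and satisfies $\varphi(D_{a,\mathfrak p})=D_{t_i,\mathfrak p'}$, which is part (a); and if $e_i=1$ then $R'[v]/(v^{e}-(\mathrm{unit})\,\pi)$ is totally tamely ramified of degree $e$ over $R'$, so the inertia subgroup of $D_{a,\mathfrak p}$ equals $I_{t_i}$, forcing $D_{a,\mathfrak p}=\varphi^{-1}(D_{t_i,\mathfrak p'})$, which is part (b). I expect the main obstacle to be the uniformity in $\mathfrak p$ of the first step: establishing that outside a finite, cover-dependent set of primes the integral model near the branch locus really has this tame-Kummer shape, with horizontal Galois action matching the normal subgroup $I_{t_i}$ of $D_{t_i}$ together with the residue extension $N(t_i)_{t_i}/K(t_i)$. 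Passing from the characteristic-zero geometric picture to reduction modulo almost all $\mathfrak p$, and the bookkeeping caused by $t_i$ being only $\overline K$-rational (so that one must track the single prime $\mathfrak p'$), are the technical heart; once that model is in hand the local computation above is elementary.
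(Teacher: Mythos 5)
This theorem is not proved in the paper at all: it is quoted verbatim from the literature (cf.\ \cite[Thm.\ 4.1]{KLN}), so there is no internal proof to compare against; your sketch has to be judged on its own, and it is essentially the standard argument one would expect (spreading out to a good tame model, the Kummer description $v^e=(\mathrm{unit})\cdot u$ of the completed cover at the branch point, then substituting $u\mapsto a-t_i$), which is also the spirit of the cited source. The overall structure is sound: $D_{a,\mathfrak{p}}$ lands in $D_{t_i}$, the restriction of $D_{a,\mathfrak{p}}$ to the residue field $N(t_i)_{t_i}$ (i.e., to $\mathrm{Frac}(R')$, completed at the prime over $\mathfrak{p}'$) is surjective onto $D_{t_i,\mathfrak{p}'}$ because $\mathrm{Frac}(R')$ sits inside the completed specialization and everything in sight is Galois, and its kernel lies in $I_{t_i}$ because it acts on $v$ only through $e$-th roots of unity; this is exactly what $\varphi(D_{a,\mathfrak{p}})=D_{t_i,\mathfrak{p}'}$ means, and the index count with $e_i=1$ gives b).

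One intermediate claim, however, is false as stated and you should repair the wording: the maximal unramified subextension of the factor of $R'[v]/\bigl(v^{e}-(\mathrm{unit})\pi^{e_i}\bigr)$ is \emph{not} in general $\mathrm{Frac}(R')$. If $d:=\gcd(e,e_i)>1$, then $v^{e/d}$ divided by the appropriate power of $\pi$ is a $d$-th root of a unit, so the local field can acquire an extra unramified piece of degree dividing $d$ on top of $\mathrm{Frac}(R')$ (and the algebra need not even be a domain); correspondingly the local inertia is $\langle\tau^{e_i}\rangle$, which can be strictly smaller than $I_{t_i}\cap D_{a,\mathfrak{p}}$. This does not damage the theorem, because $\varphi$ is the quotient by \emph{all} of $I_{t_i}$, not by the local inertia group: the extra unramified layer is generated inside the $v$-part, its Galois elements lie in $I_{t_i}$, and hence it is invisible to $\varphi$; so a) still follows from surjectivity of restriction to $\mathrm{Frac}(R')$ plus the containment $D_{a,\mathfrak{p}}\subseteq\varphi^{-1}(D_{t_i,\mathfrak{p}'})$, while in b) one has $e_i=1$, hence $d=1$, and your statement is literally correct there. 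If you rewrite the paragraph so that the identification of $\varphi(D_{a,\mathfrak{p}})$ rests on the restriction map to $\mathrm{Frac}(R')$ rather than on the (incorrect) assertion that $\mathrm{Frac}(R')$ is the full unramified part, the argument is complete modulo the routine spreading-out/uniformity step you already flag.
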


\begin{remark}
\label{rem:explicit}
%\begin{itemize}
%\item[a)]
As noted in \cite[Theorem 2.2]{KN21}, the exceptional set $\mathcal{S}_1$ in Theorem \ref{thm:kln} can be effectively bounded from above as follows. If $O_K$ is the ring of integers of $K$ and $f(t,X)\in O_K[t,X]$ is a polynomial with splitting field $N$, then a prime $p$ of $K$ is not in $\mathcal{S}$ as soon as all of the following are fulfilled: a) $p\not | |G|$; furthermore, if $\Delta\in O_K[t]$ denotes the discriminant of $f$ with respect to $x$, then b) $p$ does not divide the leading coefficient of $\Delta$, and c) the number of distinct roots of $\Delta$ does not decrease upon reduction modulo $p$.
\iffalse
The finite exceptional set $\mathcal{S}_1$ in Theorem \ref{thm:kln} can be made explicit, and this is sometimes relevant when doing calculations with concrete extensions, as we will do in Theorem \ref{thm:q5}. In order to avoid too much technicality, we only refer to \cite[Theorem 2.2]{KN21} for an effective version of Theorem \ref{thm:kln}.
\item[b)] For many applications, it is enough to obtain {\it some} specialization values $a\in \mathbb{P}^1(K)$ yielding a certain prescribed local behavior, rather than controlling the behavior for all specialization values. In this context, the notion of a {\it universally ramified prime} (see \cite{BSS} or \cite{KNS}) becomes useful: let $\mathcal{U}(N/K(t))$ be the set of all primes of $K$ ramifying in all specializations $N_a/K$ where $a\in \mathbb{P}^1(K)$ is a non-branch point of $N/K(t)$. Let $\mathcal{S}$ be any finite set of primes of $K$ disjoint from $\mathcal{U}(N/K(t))$. Then there exists a non-empty and $\mathcal{S}$-adically open set of values $a\in \mathbb{P}^1(K)$ such that $N_a/K$ is unramified at all $p\in \mathcal{S}$ (an easy consequence of Krasner's lemma). In particular, by choosing $\mathcal{S}:=\mathcal{S}_1 \setminus \mathcal{U}(N/K(t))$ with $\mathcal{S}_1$ as in Theorem \ref{thm:kln}, we see that for all such values $a$, the assertion of Theorem \ref{thm:kln} actually holds with $\mathcal{S}_1$ replaced by $\mathcal{U}(N/K(t))$, and the latter set is often much more accessible and can be bounded very effectively by simply comparing some very few specializations.
\end{itemize}
\fi
\end{remark}

\subsection{Embedding problems}
\label{sec:embed_basics}
We recall some basic terminology around embedding problems.   
A {\it finite embedding problem} over a field $K$ is a pair $(\varphi:G_K\to G,\varepsilon:\tilde{G} \to G)$, where $\varphi$ is a (continuous) epimorphism from the absolute Galois group $G_K$ of $K$ onto $G$, 
and $\varepsilon$ is an epimorphism between finite groups $\tilde{G}$ and $G$ fitting in an exact sequence $1\to N\to \tilde{G}\to G\to 1$. The kernel $N=\ker(\varepsilon)$ is called the kernel of the embedding problem. An embedding problem is called {\it central} if $\ker(\varepsilon) \le Z(\tilde{G})$. %, and {\it Frattini} if $\ker(\varepsilon)$ is contained in the Frattini subgroup of $\tilde{G}$.
A (continuous) homomorphism $\psi:G_K\to \tilde{G}$ is called a \textit{solution} to $(\varphi, \varepsilon)$ if the composition  $\varepsilon \circ \psi$ equals $\varphi$. 
In this case, the fixed field of $\ker(\psi)$ is called a {\it solution field} to the embedding problem.
A solution $\psi$ is called a \textit{proper solution} if it is surjective. In this case, the field extension of the solution field over $K$ has full Galois group $\tilde{G}$. 
%Given a finite vector ${\bf t} = (t_1,\dots, t_r)$ of independent transcendentals, an embedding problem $(\varphi:G_K\to G, \varepsilon)$ can be lifted to an embedding problem $(\varphi^\star: G_{K({\bf t})}\to G, \varepsilon)$ over $K({\bf t})$ by identifying the Galois group $G = \Gal(F/K)$ (where $F=\Fix(\ker(\varphi))$) with $\Gal(F({\bf t})/K({\bf t}))$.
%A solution $\psi$ to the latter embedding problem is called {\it regular} if $F$ is algebraically closed in the fixed field of $\ker(\psi)$.

If $K$ is a number field and $\mathfrak{p}$ is a prime of $K$, every embedding problem $(\varphi, \varepsilon)$ induces an associated {\it local embedding problem} $(\varphi_{\mathfrak{p}}, \varepsilon_{\mathfrak{p}})$ defined as follows: $\varphi_{\mathfrak{p}}$ is the restriction of $\varphi$ to $G_{K_{\mathfrak{p}}}$ (well defined up to fixing an embedding of $\overline{K}$ into $\overline{K_{\mathfrak{p}}}$), and $\varepsilon_{\mathfrak{p}}$ is the restriction of $\varepsilon$ to $\varepsilon^{-1}(G({\mathfrak{p}}))$, where $G(\mathfrak{p}) := \varphi_{\mathfrak{p}}(G_{K_{\mathfrak{p}}})$.

We require two well-known results on central embedding problems.
The first  (cf.\ \cite[Chapter IV, Cor.\ 10.2]{MM}) is a local global-principle for the solvability of such embedding problems, which is essentially a consequence of the local-global principle for Brauer embedding problems over number fields (\cite[Chapter IV, Cor.\ 7.8]{MM}).

\begin{proposition}
\label{lem:prime_kernel}
Let $\Gamma = C.G$ be a central extension of $G$ by a cyclic group $C$ of prime order and $\varepsilon:\Gamma\to G$ the canonical projection.
Let $\varphi:G_{\mathbb{Q}}\to G$ be a continuous epimorphism. Then the embedding problem 
 $(\varphi, \varepsilon)$ is solvable if and only if all associated local embedding problems $(\varphi_p, \varepsilon_p)$ are solvable, where $p$ runs through all primes of $\mathbb{Q}$ (including the infinite one). 
\end{proposition}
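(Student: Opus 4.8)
The plan is to derive this from the local-global principle for Brauer embedding problems over number fields, which is exactly \cite[Chapter IV, Cor.\ 7.8]{MM}. First I would recall that a central embedding problem with kernel $C$ cyclic of prime order $\ell$ is, up to the usual reduction, a \emph{Brauer-type} embedding problem: the obstruction to its solvability lives in $H^2(G_{\mathbb{Q}}, C)$, where $C$ carries trivial $G_{\mathbb{Q}}$-action (centrality). Concretely, pull back the group extension class $[\varepsilon]\in H^2(G,C)$ along $\varphi$ to obtain a class $\varphi^{*}[\varepsilon]\in H^2(G_{\mathbb{Q}},C)$; the embedding problem $(\varphi,\varepsilon)$ is solvable if and only if $\varphi^{*}[\varepsilon]=0$. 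This is the standard cohomological criterion for embedding problems with abelian kernel and needs only that the kernel be central so that the coefficient module is trivial.

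Next I would restrict to a prime $p$ (including $p=\infty$): the local embedding problem $(\varphi_p,\varepsilon_p)$ has as its obstruction the image of $\varphi^{*}[\varepsilon]$ under the restriction map $H^2(G_{\mathbb{Q}},C)\to H^2(G_{\mathbb{Q}_p},C)$. Indeed, $\varepsilon_p$ is by definition the restriction of $\varepsilon$ to the preimage $\varepsilon^{-1}(G(p))$ of the local image $G(p)=\varphi_p(G_{\mathbb{Q}_p})$, so its class in $H^2(G(p),C)$ is the restriction of $[\varepsilon]$, and pulling back along $\varphi_p$ gives precisely $\mathrm{res}_p(\varphi^{*}[\varepsilon])$. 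Hence solvability of all local embedding problems is equivalent to $\varphi^{*}[\varepsilon]$ lying in the kernel of the total restriction map $H^2(G_{\mathbb{Q}},C)\to \prod_p H^2(G_{\mathbb{Q}_p},C)$.

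The crux is then the injectivity of that total restriction map on the relevant class. Since $C\cong \mathbb{Z}/\ell\mathbb{Z}$ is a finite trivial module, $H^2(G_{\mathbb{Q}},C)$ is controlled by the Brauer group: choosing a primitive $\ell$-th root of unity identifies $C$ with $\mu_\ell$ after a cyclotomic twist, or one argues directly via $H^2(G_{\mathbb{Q}},\mathbb{Z}/\ell\mathbb{Z})\hookrightarrow \mathrm{Br}(\mathbb{Q})[\ell]$ type statements; in any case the Hasse principle for the Brauer group of a number field (the fundamental exact sequence $0\to \mathrm{Br}(\mathbb{Q})\to \bigoplus_v \mathrm{Br}(\mathbb{Q}_v)\to \mathbb{Q}/\mathbb{Z}\to 0$) gives that a class in $H^2(G_{\mathbb{Q}},C)$ which vanishes in every $H^2(G_{\mathbb{Q}_p},C)$ must vanish globally. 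This is the content packaged in \cite[Chapter IV, Cor.\ 7.8]{MM}, and I would simply invoke it rather than reprove the Hasse principle. Combining the three steps: $(\varphi,\varepsilon)$ solvable $\iff \varphi^{*}[\varepsilon]=0 \iff \mathrm{res}_p(\varphi^{*}[\varepsilon])=0$ for all $p$ $\iff (\varphi_p,\varepsilon_p)$ solvable for all $p$.

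The main obstacle, and the only genuinely nontrivial input, is the global injectivity in the last step — i.e., that local triviality of the Brauer obstruction forces global triviality. Everything else is bookkeeping about pullback and restriction of cohomology classes and the translation between embedding problems and $H^2$. One small point requiring care is that the local image $G(p)$ may be a proper subgroup of $G$, so the local embedding problem is the one attached to $\varepsilon^{-1}(G(p))\to G(p)$ and not to $\varepsilon$ itself; but since restriction of cohomology classes is functorial in the group, this is exactly matched by the restriction map on $H^2$, so no difficulty arises. I would also remark that the statement is given here over $\mathbb{Q}$ for the application, but the same argument works verbatim over any number field $K$.
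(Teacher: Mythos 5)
First, note that the paper itself gives no proof of this proposition: it is quoted from \cite[Chapter IV, Cor.\ 10.2]{MM}, with the remark that it is essentially a consequence of the local-global principle for Brauer embedding problems \cite[Chapter IV, Cor.\ 7.8]{MM}. Your outline (obstruction class $\varphi^{*}[\varepsilon]\in H^2(G_{\mathbb{Q}},C)$ for the central, hence trivial-action, kernel; local obstructions are the restrictions of this class; conclude from injectivity of the total restriction map) is exactly the standard route behind that citation, and your first two steps — the cohomological solvability criterion for (weak) solutions and the identification of the local obstruction with $\mathrm{res}_p(\varphi^{*}[\varepsilon])$ — are correct.

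The gap is in the third step, which is the only place where the hypothesis that $|C|$ is \emph{prime} must enter — and your argument never uses it. For $\ell=|C|$ odd, $C$ with trivial $G_{\mathbb{Q}}$-action is not isomorphic to $\mu_\ell$ as a Galois module, so ``choosing a primitive $\ell$-th root of unity identifies $C$ with $\mu_\ell$'' is false over $\mathbb{Q}$, and there is no natural injection $H^2(G_{\mathbb{Q}},\mathbb{Z}/\ell\mathbb{Z})\hookrightarrow \mathrm{Br}(\mathbb{Q})[\ell]$; hence the Hasse principle for $\mathrm{Br}(\mathbb{Q})$ does not apply directly, and \cite[Chapter IV, Cor.\ 7.8]{MM} (which concerns Brauer embedding problems, i.e., kernels realized by roots of unity) cannot simply be invoked for the trivial module — that reduction is precisely the content of the passage from Cor.\ 7.8 to Cor.\ 10.2 in \cite{MM}. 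Two standard ways to close the gap: (i) restrict to $\mathbb{Q}(\zeta_\ell)$, where $C\cong\mu_\ell$ and the obstruction does lie in $\mathrm{Br}(\mathbb{Q}(\zeta_\ell))[\ell]$; local triviality at all places of $\mathbb{Q}$ implies local triviality at all places of $\mathbb{Q}(\zeta_\ell)$, so Brauer--Hasse--Noether kills the restricted class, and then $\mathrm{cor}\circ\mathrm{res}$ is multiplication by $[\mathbb{Q}(\zeta_\ell):\mathbb{Q}]=\ell-1$, prime to $\ell$, hence injective on the $\ell$-torsion group $H^2(G_{\mathbb{Q}},C)$; or (ii) use Poitou--Tate duality: the locally trivial part of $H^2(G_{\mathbb{Q}},\mathbb{Z}/\ell\mathbb{Z})$ is dual to the locally trivial part of $H^1(G_{\mathbb{Q}},\mu_\ell)=\mathbb{Q}^\times/(\mathbb{Q}^\times)^\ell$, which vanishes because $\ell$ is prime (the Grunwald--Wang special case needs $8\mid|C|$). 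Either way, primality of $|C|$ enters exactly where your sketch is vague, so as written the proposal is not yet a proof for odd $\ell$; for $\ell=2$ (the only case the paper actually uses, via $2.S_5$) your identification $C\cong\mu_2$ and the Brauer-group argument are fine.
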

\begin{remark}
\label{rem:prime_kernel}
If additionally $|C|=2$, the above local-global principle holds already with the set of all primes replaced by ``the set of all primes, with one exception", hence, e.g., with the set of all finite primes. This is due to the product formula for Hasse invariants for Brauer embedding problems.
\end{remark}

Since the local embedding problem at an unramified prime is always solvable, the above lemma gives an efficient method to check for global solvability of a given embedding problem by investigating only the finitely many ramified primes.

The second result (see \cite[Prop.\ 2.1.7]{Serre}) is a tool to control ramification in solutions of central embedding problems, assuming solvability. 
\begin{proposition}
\label{prop:serre}
Let $\Gamma = C.G$ be a central extension of $G$ by a finite abelian group $C$, let $\varepsilon:\Gamma\to G$ be the canonical projection, and let $\varphi: G_{\mathbb{Q}}\to G$ be a continuous epimorphism such that the embedding problem $(\varphi,\varepsilon)$ has a solution. 
For each finite prime $p$, let $\tilde{\varphi_p}: G_{\mathbb{Q}_p}\to \Gamma$ be a solution of the associated local embedding problem $(\varphi_p,\epsilon_p)$, 
 chosen such that all but finitely many $\tilde{\varphi_p}$ are unramified. Then there exists a (not necessarily proper) solution $\tilde{\varphi}: G_{\mathbb{Q}}\to \Gamma$ of $(\varphi,\epsilon)$ such that for all finite primes $p$, the restrictions of $\tilde{\varphi}$ and $\tilde{\varphi_p}$ to the inertia group inside $G_{\mathbb{Q}_p}$ coincide. In particular, $\tilde{\varphi}$ is ramified exactly at those finite primes $p$ for which $\tilde{\varphi_p}$ is ramified.
\end{proposition}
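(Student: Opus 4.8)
The plan is to regard the solution set of $(\varphi,\varepsilon)$ as a torsor and thereby reduce the statement to a purely abelian ramification-prescription problem, which global class field theory over $\mathbb{Q}$ solves. Using the solvability hypothesis, fix once and for all one (global) solution $\psi_0\colon G_{\mathbb{Q}}\to\Gamma$ of $(\varphi,\varepsilon)$. The elementary point --- which I would isolate as a preliminary lemma --- is that, since $C=\ker(\varepsilon)$ is central in $\Gamma$ and abelian, for any continuous homomorphism $\chi\colon G_{\mathbb{Q}}\to C$ the pointwise product $g\mapsto\chi(g)\psi_0(g)$ is again a solution (one has $\varepsilon(\chi(g)\psi_0(g))=\varepsilon(\psi_0(g))=\varphi(g)$, and it is a homomorphism because $C$ is central), and conversely the ``quotient'' of any two solutions is such a homomorphism; hence the solutions of $(\varphi,\varepsilon)$ form a torsor under $\mathrm{Hom}(G_{\mathbb{Q}},C)$, and at each prime $p$ the solutions of $(\varphi_p,\varepsilon_p)$ likewise form a torsor under $\mathrm{Hom}(G_{\mathbb{Q}_p},C)$. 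Applying this locally, for every finite prime $p$ there is a continuous character $\lambda_p\in\mathrm{Hom}(G_{\mathbb{Q}_p},C)$ with $\tilde{\varphi_p}=\lambda_p\cdot(\psi_0|_{G_{\mathbb{Q}_p}})$. Since $\psi_0$ has finite image it is ramified at only finitely many primes, and $\tilde{\varphi_p}$ is unramified for all but finitely many $p$ by hypothesis; therefore $\lambda_p|_{I_p}$ is trivial for all but finitely many $p$.

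It now suffices to produce a single global character $\mu\in\mathrm{Hom}(G_{\mathbb{Q}},C)$ with $\mu|_{I_p}=\lambda_p|_{I_p}$ for every finite prime $p$. Indeed, $\tilde{\varphi}:=\mu\cdot\psi_0$ is then a continuous (but in general non-proper) global solution of $(\varphi,\varepsilon)$, and for each finite $p$ one computes on $I_p$ that $\tilde{\varphi}|_{I_p}=(\mu\cdot\psi_0)|_{I_p}=(\lambda_p\cdot\psi_0)|_{I_p}=\tilde{\varphi_p}|_{I_p}$, which is exactly the asserted coincidence of inertia restrictions; the final sentence about the ramification loci being identical then follows immediately, as ramification of a homomorphism at $p$ means nontriviality on $I_p$.

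To construct $\mu$ I would invoke global class field theory over $\mathbb{Q}$. As $C$ is finite abelian, continuous homomorphisms $G_{\mathbb{Q}}\to C$ correspond, via the reciprocity map, to continuous characters of the idele class group $\mathbb{Q}^\times\backslash\mathbb{A}_{\mathbb{Q}}^\times$; since this group is topologically $\mathbb{R}_{>0}\times\prod_p\mathbb{Z}_p^\times$ and $\mathbb{R}_{>0}$ admits no nontrivial character of finite order, such characters are exactly the continuous homomorphisms $\prod_p\mathbb{Z}_p^\times\to C$, and these --- because $C$ is finite, so the kernel is open --- are precisely the families $(\mu_p)_p$ of characters $\mu_p\colon\mathbb{Z}_p^\times\to C$ that are trivial for all but finitely many $p$. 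Under the standard compatibility of local and global reciprocity, together with the local isomorphism between $I_p^{\mathrm{ab}}$ and $\mathbb{Z}_p^\times$, the component $\mu_p$ of such a character corresponds exactly to its restriction to $I_p$. Hence I would simply take $\mu$ to be the global character whose $p$-component, for each $p$, is $\lambda_p|_{I_p}$ transported along $I_p^{\mathrm{ab}}\cong\mathbb{Z}_p^\times$: by the previous paragraph almost all components are trivial, so this is a legitimate character, and $\mu|_{I_p}=\lambda_p|_{I_p}$ by construction. (When $|C|$ is prime --- in particular in the case $|C|=2$ relevant to the application --- one can bypass the adelic language and exhibit $\mu$ directly as a suitable product of quadratic, resp.\ prime-power, Dirichlet characters of prescribed conductor.)

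The only genuine input is the class field theory step of the previous paragraph, and even there the content is soft: we are prescribing only the ramification of $\mu$, i.e.\ the $\mathbb{Z}_p^\times$-parts of the associated idele class character, and these can be chosen completely independently at the various primes, so no Grunwald--Wang type obstruction arises (such obstructions concern the prescription of Frobenius behavior, which we are not constraining). The remaining points are bookkeeping: one must track the normalization of the reciprocity map (which at worst replaces a component $\mu_p$ by its inverse and is harmless), and one should note that the proposition imposes no condition at the archimedean place, so the behavior of $\mu$ --- hence of $\tilde{\varphi}$ --- at $\infty$ is left unconstrained and causes no difficulty.
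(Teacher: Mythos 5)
The paper does not prove this proposition itself---it cites Serre, Prop.\ 2.1.7---and your argument (the solutions of a central embedding problem with abelian kernel form a torsor under $\mathrm{Hom}(G_{\mathbb{Q}},C)$, so it suffices to build a global character with prescribed restrictions to all inertia groups, which works over $\mathbb{Q}$ because $\mathrm{Gal}(\mathbb{Q}^{\mathrm{ab}}/\mathbb{Q})\cong\prod_p\mathbb{Z}_p^\times$ is the direct product of its inertia subgroups) is exactly the standard proof given in that reference, and it is correct. One small imprecision: it is not $I_p^{\mathrm{ab}}$ but the image of $I_p$ in $G_{\mathbb{Q}_p}^{\mathrm{ab}}$ (i.e.\ $\mathrm{Gal}(\mathbb{Q}_p^{\mathrm{ab}}/\mathbb{Q}_p^{\mathrm{nr}})$) that is canonically isomorphic to $\mathbb{Z}_p^\times$; since your $\lambda_p$ is defined on all of $G_{\mathbb{Q}_p}$, its restriction to $I_p$ does factor through that image, so the construction of $\mu$ goes through unchanged.
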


\section{Proof of Theorem \ref{thm:main}} 
\label{sec:proof}

%\subsection{The case $G=SL_2(5)$}
\subsection{Preliminary considerations}

The basic approach in the proof of the case $G=SL_2(5)$ is the same as for the realization of $SL_2(7)$ in \cite{Koe21}: we try to realize $PGL_2(5)\cong S_5$ over $\mathbb{Q}$ in such a way that all nontrivial inertia groups are generated by involutions outside of $PSL_2(5)$ (i.e., in the representation of $PGL_2(5)$ as $S_5$, by transpositions), and such that furthermore no local obstructions to the central embedding problem given by $1\to C_2\to 2.S_5 \to S_5\to 1$ arise. Here, $2.S_5$ denotes the stem extension of $S_5$ in which the transpositions of $S_5$ are split. The centralizer of such an involution in $S_5$ is conjugate to $\langle(1,2), Sym(\{3,4,5\}) \rangle \cong C_2\times S_3$. Since the subgroup $C_6\le C_2\times S_3$ of $S_5$ is split in the central extension, the only local obstruction to the embedding problem which can arise at a tamely ramified prime with inertia group $C_2$ (generated by a transposition) occurs when the decomposition group equals $C_2\times C_2$.

Our main task is therefore to guarantee the existence of tamely ramified $S_5$-extensions of $\mathbb{Q}$ in which all inertia groups at ramified primes are generated by transpositions and such that the corresponding decomposition groups are additionally cyclic (namely, of order $2$ or $6$). Recalling additionally that tame $S_n$-extensions in which all non-trivial inertia groups at finite primes are generated by transpositions are exactly those whose degree-$n$ subextension has squarefree discriminant, we thus have a nice translation of our problem into a combination of two previously-investigated properties.

\begin{lemma}
\label{lem:sqf-loccyc}
Let $F/\mathbb{Q}$ be a degree-$5$ extension with Galois closure $\Omega/\mathbb{Q}$ fulfilling both of the following:
\begin{itemize}
\item[1)] The discriminant $\Delta(F)$ is squarefree.
\item[2)] $\Omega/\mathbb{Q}$ is ``locally cyclic", i.e., all its decomposition groups are cyclic.
\end{itemize}
Then $\Omega/\mathbb{Q}(\sqrt{\Delta(F)})$ embeds into an $SL_2(5)$-extension unramified at all finite primes.
\end{lemma}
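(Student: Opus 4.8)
The plan is to deduce the lemma from a single central embedding problem over $\mathbb{Q}$, solved by the local--global principle (Proposition \ref{lem:prime_kernel}, together with Remark \ref{rem:prime_kernel}) and controlled by Serre's ramification-matching result (Proposition \ref{prop:serre}). First I would record the group theory forced by hypothesis 1): squarefreeness of $\Delta(F)$ makes every ramified prime tame (a wild prime would contribute at least a square), and at such a prime the fact that the contribution $\sum_{\mathfrak{p}\mid p} f_{\mathfrak{p}}(e_{\mathfrak{p}}-1)$ to $v_p(\Delta(F))$ equals $1$ forces the inertia generator to act on the five conjugates of $F$ as a single transposition. Since $\Omega\neq\mathbb{Q}$ by Minkowski, $\Gal(\Omega/\mathbb{Q})$ is a transitive subgroup of $S_5$ containing a transposition, hence equals $S_5$; and $\mathbb{Q}(\sqrt{\Delta(F)})$ is the quadratic subfield of $\Omega$ fixed by $A_5$, so $\Gal(\Omega/\mathbb{Q}(\sqrt{\Delta(F)})) = A_5\cong PSL_2(5)$. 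I then take the central embedding problem $(\varphi,\varepsilon)$ where $\varphi:G_{\mathbb{Q}}\to S_5$ is the epimorphism cutting out $\Omega$ and $\varepsilon:2.S_5\to S_5$ is the stem cover in which transpositions split (as in the discussion preceding the lemma), noting that $\varepsilon^{-1}(A_5)=SL_2(5)$.

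Next I would establish solvability of $(\varphi,\varepsilon)$. By Proposition \ref{lem:prime_kernel} and Remark \ref{rem:prime_kernel} it suffices to solve the local embedding problem at each finite prime $p$, which is automatic where $\Omega/\mathbb{Q}$ is unramified. At a ramified $p$ (odd and tame by the above) the decomposition group $D_p$ of $\Omega/\mathbb{Q}$ normalizes the order-$2$ inertia group $\langle\tau_p\rangle$ generated by a transposition, hence lies in the centralizer $C_{S_5}(\tau_p)\cong C_2\times S_3$; hypothesis 2) forces $D_p$ to be cyclic, and the only cyclic subgroups of $C_2\times S_3$ containing $\langle\tau_p\rangle$ are $\langle\tau_p\rangle\cong C_2$ and a subgroup isomorphic to $C_6$. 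For both of these the restricted cover $\varepsilon^{-1}(D_p)\to D_p$ splits (the lift of $\tau_p$ has order $2$, and the splitting of the preimage of $C_6$ was recorded in the preliminary discussion), so composing $\varphi_p$ with a section solves the local problem. Hence $(\varphi,\varepsilon)$ is solvable. It is worth observing here that any solution is automatically proper, since a non-proper solution would produce a complement to the center $\langle z\rangle$ in $2.S_5$, hence an index-$2$ subgroup of the perfect group $SL_2(5)$, which does not exist.

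Now I would invoke Proposition \ref{prop:serre}. Choose local solutions $\tilde\varphi_p$: the unramified one wherever $\Omega/\mathbb{Q}$ is unramified, and $\tilde\varphi_p=s\circ\varphi_p$ (for $s$ a section of $\varepsilon^{-1}(D_p)\to D_p$) at the finitely many ramified primes. Proposition \ref{prop:serre} then yields a (necessarily proper) solution $\tilde\varphi:G_{\mathbb{Q}}\to 2.S_5$ whose inertia at each finite $p$ coincides with that of $\tilde\varphi_p$. Let $M$ be its solution field; then $\Omega\subseteq M$ and $\Gal(M/\mathbb{Q}(\sqrt{\Delta(F)})) = \varepsilon^{-1}(A_5)=SL_2(5)$. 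At a finite prime $p$, the inertia group of $M/\mathbb{Q}$ over $p$ is trivial if $\Omega/\mathbb{Q}$ is unramified there, and otherwise is $\langle s(\tau_p)\rangle$, a group of order $2$ generated by a lift of the transposition $\tau_p$. The inertia of $M$ over a prime of $\mathbb{Q}(\sqrt{\Delta(F)})$ above $p$ equals the intersection of this group with $\varepsilon^{-1}(A_5)$; but the only nontrivial element of $\varepsilon^{-1}(\langle\tau_p\rangle)$ lying in $\varepsilon^{-1}(A_5)$ is the central element $z$ of $2.S_5$, which is not contained in $\langle s(\tau_p)\rangle$. Hence this intersection is trivial, $M/\mathbb{Q}(\sqrt{\Delta(F)})$ is an $SL_2(5)$-extension unramified at all finite primes, and $\Omega\subseteq M$ gives the asserted embedding.

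The main obstacle is the local solvability in the second step, and this is precisely where hypothesis 2) is essential: if $D_p$ were allowed to be non-cyclic, i.e. $C_2\times C_2$ or $C_2\times S_3$, its preimage in $2.S_5$ need not split --- for a Klein four-group generated by two disjoint transpositions it is dihedral of order $8$ --- and the associated local embedding problem can then be genuinely unsolvable, obstructing the whole construction. A subsidiary point requiring care is the choice of the cover $2.S_5$ (the one in which transpositions split): this is what simultaneously identifies $\varepsilon^{-1}(A_5)$ with $SL_2(5)$, makes the preimages of $C_2$ and $C_6$ split, and forces the inertia in the resulting $SL_2(5)$-extension to vanish over $\mathbb{Q}(\sqrt{\Delta(F)})$.
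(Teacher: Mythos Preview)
Your proof is correct and follows essentially the same approach as the paper's: interpret condition 1) as transposition inertia forcing $\Gal(\Omega/\mathbb{Q})=S_5$, use condition 2) together with the splitting of $C_2$ and $C_6$ in $2.S_5$ to kill all local obstructions at finite primes, invoke Proposition~\ref{lem:prime_kernel} with Remark~\ref{rem:prime_kernel} for solvability, and then Proposition~\ref{prop:serre} with section-type local solutions to make the $SL_2(5)$-extension of $\mathbb{Q}(\sqrt{\Delta(F)})$ unramified at all finite primes. Your write-up is in fact a bit more explicit than the paper's in justifying properness (via perfectness of $SL_2(5)$) and in spelling out why the inertia intersects $\varepsilon^{-1}(A_5)$ trivially, but the strategy is identical.
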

\begin{proof}
%\marginpar{Also need to assume that complex conjugation is not a double transposition!!!!}
%%% No, since for kernel C2 may leave out one prime (Hasse invariant)!!! State this as remark after Prop.\ref{lem:prime_kernel}!
Condition 1) translates to saying that $F/\mathbb{Q}$ is tamely ramified with all non-trivial inertia groups generated by transpositions. Since a transitive subgroup of $S_n$ generated by transpositions is necessarily $S_n$ itself, we have $\textrm{Gal}(\Omega/\mathbb{Q})\cong S_5$ and $\textrm{Gal}(\Omega/\mathbb{Q}(\sqrt{\Delta(F)})) \cong A_5$, with $\Omega/\mathbb{Q}(\sqrt{\Delta(F)})$ unramified at all finite primes.  As deduced above, the central embedding problem induced by $1\to C_2\to 2.S_5\to S_5\cong \textrm{Gal}(\Omega/\mathbb{Q})\to 1$ has no local obstruction at any ramified finite prime, and hence no local obstruction at {\it any} finite prime. Due to  Proposition \ref{lem:prime_kernel} and Remark \ref{rem:prime_kernel}, it is therefore solvable (and automatically properly solvable, since the group extension is non-split). Using additionally Proposition \ref{prop:serre} (together with the fact that all the subgroups generated by transpositions split in the extension $2.S_5$), we see that the solution field $L$ may be chosen such that $L/\Omega$, and hence even $L/\mathbb{Q}(\sqrt{\Delta(F)})$, is unramified at all finite primes\end{proof}

Number fields $F$ fulfilling part 1) of Lemma \ref{lem:sqf-loccyc} are indeed well-known for all degrees, see, e.g., \cite{Yam70} or \cite{Ked}; for quintics, Bhargava has obtained much stronger results on the asymptotic distribution of such fields, see in particular \cite[Theorem 1.3]{Bharg}. Also, part 2) was obtained for $S_5$ in \cite{KK21} (however, explicitly {\it not} fulfilling 1) at the same time). In order to get both properties simultaneously, we search for geometric $S_5$-realizations with suitable local behavior at ramified places. In \cite{Koe21}, the analog (with condition 1) suitably adapted) for $G=SL_2(7)$ was obtained by using suitable ``three branch point" extensions with group $PGL_2(7)$. The main technical tool was a ``pullback" construction which, roughly speaking, allowed to discard two of the three branch points, while favorably manipulating the residue extension at the third one.

 However, we face one technical problem: while there are of course some very easy polynomials $f(t,X)$ with Galois group $S_5$ over $\mathbb{Q}(t)$ (such as $f(t,X) = X^5-t(X-1)$), the translation process of \cite{Koe21} seems to result in certain ``badly behaving" primes. E.g., for the polynomial $X^5-t(X-1)$, the residue extension at the only branch point  $t_0$ with inertia group of order $2$ contains the quadratic field $\mathbb{Q}(\sqrt{-2})$. Since $\mathbb{Q}(\sqrt{t_0}) = \mathbb{Q}(\sqrt{5})$, one may ``swallow" this quadratic field via translate $t(s):=-10s^2$. However, for the translated polynomial $f(t(s),X) = X^5+10s^2(X-1)$, the prime $p=5$ will now ramify of ramification index $>2$ in all specializations. The same in fact seems to occur for all three-branch-point extensions with Galois group $S_5$. One therefore needs to dig a bit deeper and search among some 4-branch point extensions with group $S_5$, corresponding to finding suitable members of families $f(a,t,X)$ of $S_5$-polynomials with four branch points (with $a$ to be specialized suitably in the process). Adding extra branch points comes with the downside of having to control more and more residue extensions at branch points simultaneously.  There is, however, also a notable upside: By a well-known argument due to Serre (e.g., Example 10.2 in \cite[Chapter I.10]{MM}), a three branch point Galois extension of $\mathbb{Q}(t)$ with Galois group $G$ cannot possible specialize to totally real extensions, except for a few small dihedral groups $G$, meaning that via a specialization approach using such an extension one cannot hope for {\it real} quadratic fields with everywhere (namely, including the infinite primes) unramified $G$-extensions. Our 4-branch point approach below will, however, succeed. Theorem \ref{thm:q5} below identifies quadratic number fields with an unramified $SL_2(5)$-extension inside an explicitly given infinite family.
 %; part b) goes farther by identifying a subfamily of quadratic fields every single member of which has the above property.

\subsection{A strong version of Theorem \ref{thm:main} and its proof}

\begin{theorem}
\label{thm:q5}
Assume that $w=\frac{w_1}{w_2}\in \mathbb{Q}$ (with coprime integers $w_1,w_2$) fulfills all of the following:
\begin{itemize}
\item[a)] $(-50w_1^2+27w_2^2)(10w_1^2+8w_1w_2+w_2^2)\in \mathbb{Z}$ is equivalent, up to squares, to a squarefree number all of whose prime divisors are congruent to $1$ modulo $3$.
%\item[b)] $w_1$ is odd. \marginpar{This may be dropped??}
\item[b)] %$w\notin [-0.354, -0.254] \cup [-0.238, 4.89]$.
%\marginpar{Intervals based on earlier false parameterization. Recalculate!}
$w\ne 0$, and $w \in [-0.7348,-0.645] \cup [-0.155, 0.7348]$. 
\end{itemize}

Then there exist infinitely many $s\in \mathbb{Q}$ such that the following holds: If $\Delta_s$ denotes the discriminant of the polynomial $$f_{w,s}(X) = X^2(X-1)^3+2s^2(50w^2-27)(10w^2+8w+1)(X-2w^2),$$ 
then the
quadratic number field $\mathbb{Q}(\sqrt{\Delta_s})$ possesses an everywhere (i.e., including the infinite primes) unramified $SL_2(5)$-extension.
Moreover, the root fields of $f_{w,s}$ fulfill the assumptions of Lemma \ref{lem:sqf-loccyc}, and there are infinitely many \underline{distinct} quadratic fields among the fields $\mathbb{Q}(\sqrt{\Delta_s})$.

If furthermore 
\begin{itemize}
\item[b')] %$w\in [-0.254, -0.238] \cup [8.234, 8.262]$,
$|w|\in [0.645, 0.7348]$
\end{itemize} then among those quadratic fields, there are infinitely many real ones.
\iffalse
%code for real roots:
p<w>:=PolynomialRing(Integers());
q<t>:=PolynomialRing(p);
q2<x>:=PolynomialRing(q);
f:=x^3*(x-1)^2-2*t^2*(43*w^2-200*w-50)*(97*w^2+64*w+10)*(9*w^2*x-(w^2-8*w-2));
d:=Discriminant(f);
lc:=LeadingCoefficient(d);
fac:=Factorization(lc);
r:=[rr[1]: rr in Roots(PolynomialRing(RealField())!&*[a[1]: a in fac])];
e:=Discriminant(&*[a[1]: a in Factorization(d)]);
e2:=&*[a[1]: a in Factorization(e)];
for rr in Roots(PolynomialRing(RealField())!e2) do if not rr[1] in r then Append(~r, rr[1]); end if; end for;
r:=Sort(r);r;

test:=[BestApproximation(r[1]-1, 10^6)]; 
for j:=1 to #r-1 do Append(~test,BestApproximation((r[j]+r[j+1])/2,10^6)); end for;
Append(~test, BestApproximation(r[#r]+1, 10^6)); test;

for w in test do
p<t>:=PolynomialRing(Rationals());
q<x>:=PolynomialRing(p);
f:=x^3*(x-1)^2-2*t^2*(43*w^2-200*w-50)*(97*w^2+64*w+10)*(9*w^2*x-(w^2-8*w-2));
d:=Discriminant(f);fac:=Factorization(d);
s:=[rr[1]: rr in Roots(PolynomialRing(RealField())!&*[a[1]: a in fac])];
s2:=[s[1]-1];
for j:=1 to #s-1 do Append(~s2, (s[j]+s[j+1])/2); end for;
Append(~s2, s[#s]+1);
for t0 in s2 do x:=t; #Roots(PolynomialRing(RealField())!(x^3*(x-1)^2-2*t0^2*(43*w^2-200*w-50)*(97*w^2+64*w+10)*(9*w^2*x-(w^2-8*w-2))));
end for;
<w*1.0>; end for;
\fi
\end{theorem}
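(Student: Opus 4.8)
The plan is to exhibit the fields $\mathbb{Q}(\sqrt{\Delta_s})$ as specializations of a single $\mathbb{Q}$-regular $S_5$-extension of a rational function field and then to invoke Lemma~\ref{lem:sqf-loccyc}. I would start from the polynomial $X^2(X-1)^3-t(X-a)$ over $\mathbb{Q}(a)(t)$, i.e.\ the degree-$5$ cover $X\mapsto t=X^2(X-1)^3/(X-a)$. The first, purely geometric, task is to verify that this cover is $\mathbb{Q}(a)$-regular with monodromy group $S_5$ and to compute its ramification: four branch points, namely $t=0$ with a branch cycle of type $(3,2)$ (order $6$), $t=\infty$ of type $(4,1)$ (order $4$), and the two roots $t_1(a),t_2(a)$ of an explicit quadratic $g_a$, each carrying a single transposition; and, at $t_1$ and $t_2$, to identify the decomposition group $D_{t_i}\le C_2\times S_3$ and the residue extension $\mathcal{N}(t_i)_{t_i}/\mathbb{Q}(a)$, which is a $D_{t_i}/I_{t_i}$-cover of the $a$-line whose fibre over a value $a=a(w)$ will be the relevant number field.

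Next I would perform the quadratic pullback $t=-2(50w^2-27)(10w^2+8w+1)\,s^{2}$, after having specialized $a$ to a rational function $a(w)$ lying on the auxiliary curve described below, and study the resulting cover $\mathcal{N}'/\mathbb{Q}(w)(s)$. Over $s=0$ the inertia becomes a $3$-cycle, over $s=\infty$ a double transposition, and $t_1,t_2$ pull back to transposition branch points; the point of the special value $a(w)$ together with the pullback constant is to arrange that the decomposition groups at these transposition branch points of $\mathcal{N}'$ are \emph{cyclic} (of order $2$ or $6$) — equivalently that the residue extensions there are cyclic cubic rather than $S_3$-sextic. Once that holds, by Theorem~\ref{thm:kln} the local group at any prime of a specialization hitting such a branch point is contained in $\varphi^{-1}$ of a subgroup of a cyclic group, hence is cyclic and in particular never $C_2\times C_2$, which is simultaneously the ``locally cyclic'' condition of Lemma~\ref{lem:sqf-loccyc} and the vanishing of the only possible local obstruction to $1\to C_2\to 2.S_5\to S_5\to 1$ at a transposition-inertia prime. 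Matching \emph{both} $t_1$ and $t_2$ amounts to a system of square conditions on $w$; it is solvable along a rational curve whose parametrisation produces the quartics $50w^2-27$ and $10w^2+8w+1$. Now condition~(a): a prime dividing the pullback constant is a prime of bad reduction of $\mathcal{N}'$, so Theorems~\ref{thm:beck}, \ref{thm:kln} say nothing there and one needs a direct Newton-polygon analysis of $f_{w,s}$; the hypothesis that $(-50w_1^2+27w_2^2)(10w_1^2+8w_1w_2+w_2^2)$ be squarefree up to squares with all prime divisors $\equiv 1\pmod 3$ is exactly what makes this analysis succeed — in particular it removes the primes $2,3,5$ of $|G|$ and forces these ``fixed'' primes to ramify tamely, with a transposition as inertia and a cyclic decomposition group. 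Conditions~(b), (b') are archimedean: the stated intervals are precisely the range of $w$ for which a real interval of $s$-values exists avoiding all real branch points and giving $f_{w,s}$ the required number of real roots (for (b'), all five).

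To conclude, I would specialize $s\mapsto s_0=\pm j^3/k^2$ with $\gcd(j,k)=1$, chosen in addition inside a suitable $\{2,3,5\}$-adic neighbourhood. By Theorem~\ref{thm:beck} the branch point $s=0$ then contributes no ramification — its intersection multiplicity $3v_p(j)$ is a multiple of the inertia order $3$ — and neither does $s=\infty$, whose multiplicity $2v_p(k)$ is even; so every ramified prime meets a transposition branch point with inertia exactly a transposition, whence $\Delta_{s_0}$ is squarefree and $f_{w,s_0}$ satisfies part 1) of Lemma~\ref{lem:sqf-loccyc}, while Theorem~\ref{thm:kln} plus the cyclicity of $D_{t_1},D_{t_2}$ gives part 2). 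Lemma~\ref{lem:sqf-loccyc} then produces an $SL_2(5)$-extension of $\mathbb{Q}(\sqrt{\Delta_{s_0}})$ unramified at all finite primes; for the imaginary case this is already unramified at infinity, and under~(b') one chooses $s_0$ so that $f_{w,s_0}$ is totally real, so the $S_5$-closure, and then (by Proposition~\ref{prop:serre}, using that subgroups generated by transpositions split in $2.S_5$) the solution field, can be taken totally real. Finally a squarefree-values sieve — a Hooley-type argument for the low-degree factors of the discriminant polynomial restricted to $s_0=\pm j^3/k^2$, in the spirit of \cite{Bharg} — gives infinitely many admissible $s_0$, and since the squarefree part of $\Delta_{s_0}$ acquires new prime factors as $s_0$ varies, infinitely many of the fields $\mathbb{Q}(\sqrt{\Delta_{s_0}})$ are distinct.

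The main obstacle is the combination of the two delicate points in the middle step: making \emph{both} decomposition groups $D_{t_1},D_{t_2}$ cyclic through a single choice of the data $(a(w),\text{pullback constant})$, with $w$ still rational — which forces the two attached square conditions to be simultaneously solvable and the resulting curve to have a rational parametrisation — and the explicit Newton-polygon analysis at the primes dividing the pullback constant, which must show that the $(3,2)$-type ramification of $t=0$ does \emph{not} propagate to these primes but is replaced by a plain transposition; it is here that the congruence hypothesis in~(a) and the bulk of the paper's computations are needed.
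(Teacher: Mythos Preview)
Your overall architecture matches the paper: start from $X^2(X-1)^3-t(X-a)$, pull back by $t=cs^2$ with $c=c(w)$ and $a=a(w)$ chosen along a rational curve so that the transposition branch points acquire decomposition group $\le C_2\times C_3$, then specialize $s\mapsto u^3/v^2$ so the $3$-cycle and double-transposition branch points contribute no ramification. However, several of the crucial mechanisms are misidentified.

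\textbf{The role of condition (a).} You treat the bad primes (those dividing the pullback constant) by an unspecified ``Newton-polygon analysis'' and say condition~(a) ``removes $2,3,5$''. That is not how the paper proceeds, and I do not see how the congruence $p\equiv 1\pmod 3$ would feed into a Newton polygon. The paper instead makes a further \emph{degree-$6$} pullback $r^6=c\gamma^2 t$ and computes the residue extension at $r\mapsto 0$ explicitly: it is $\mathbb{Q}(\zeta_3,\sqrt{c/2})$, with the Galois involutions mapping to the transpositions $(1,2)$ and $(3,4)$ in $S_5$. Condition~(a) says the odd-multiplicity primes in $c/2$ are $\equiv 1\pmod 3$, hence split in $\mathbb{Q}(\zeta_3)$, so at those primes this residue extension has decomposition group $\langle(1,2)\rangle$; Krasner then transfers this to nearby specializations of $F^w/\mathbb{Q}(s)$. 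The prime $2$ is handled by a separate explicit computation, not by condition~(a). Moreover, condition~(b) is \emph{not} purely archimedean: in the paper it is used to force $c>0$, which is what makes $3$ split in $\mathbb{Q}(\sqrt{c/2})$ and hence keeps the decomposition group at $3$ inside $\langle(3,4)\rangle$.

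\textbf{The totally real case.} Your argument that ``by Proposition~\ref{prop:serre} the solution field can be taken totally real'' does not go through: that proposition controls inertia only at \emph{finite} primes. Even when $F^w_{s_0}$ is totally real, complex conjugation in the $2.S_5$-solution field $L$ may land on the central involution. The paper fixes this by a twist: it shows that the integer $(-50w_1^2+27w_2^2)(10w_1^2+8w_1w_2+w_2^2)$ is always (up to a factor $4$) $\equiv 3\pmod 4$, so among its prime divisors (all of which ramify in $k$) there is one $p\equiv 3\pmod 4$; replacing $L$ by the fixed field of the diagonal $C_2$ in $L(\sqrt{-p})$ flips the archimedean behaviour without disturbing the finite places. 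This step is missing from your sketch.

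\textbf{Infinitude.} No Hooley-type sieve is needed. Once each prime in the exceptional set $\mathcal{S}_0$ admits a good specialization, Krasner plus the $p$-adic density of $\{u^3/v^2\}$ in $\mathbb{Q}_p$ gives a nonempty $\mathcal{S}_0$-adically open set of admissible $s_0$'s; Theorem~\ref{thm:beck} already forces transposition inertia at all remaining ramified primes, and varying $s_0$ produces arbitrarily large such primes, hence infinitely many distinct quadratic fields.
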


A few remarks on the above theorem are in order.
\begin{remark}
\begin{itemize}
\item[a)] There do indeed exist values $w\in \mathbb{Q}$ fulfilling all the assumptions (including condition b')) of the theorem, the easiest being $w=-2/3$, for which the expression in a) is equivalent (up to squares) to $43$. In fact, there are infinitely many such $w$; for example, the curve $43Y^2 = (-50W^2+27)(10W^2+8W+1)$ is a genus-$1$ curve with a rational point (e.g., at $W=-2/3$), hence elliptic, and its rank is positive (which may be quickly concluded from Mazur's torsion theorem by simply enumerating more than $16$ points; e.g., there are $20$ such points whose $W$-value has numerator and denominator of absolute value $\le 300$. Direct computation with Magma actually yields that the rank is $2$). But rational points of an elliptic curve of positive rank are well-known (by the ``Poincar\'e-Hurwitz theorem" \cite[Satz 13]{Hurw}) to be locally dense in the real topology, 
whence infinitely many values $w$ fulfilling all the conditions exist in the vicinity of $w_0=-2/3$.
\item[b)] A different, but related question is whether one may even find quadratic fields $\mathbb{Q}(\sqrt{\Delta_s})$ as in the assertion such that any prescribed finite set of primes is unramified in $\mathbb{Q}(\sqrt{\Delta_s})$. This would require finding not just infinitely many admissible values $w$ as above, but infinitely many pairwise coprime squarefreee integers $D>0$ with all prime divisors congruent to $1$ modulo $3$, and such that the curve $DY^2 = (-50W^2+27)(10W^2+8W+1)$ has positive rank (since, indeed, calculation of $\Delta_s$ shows that the prime divisors of $D$ ramify in $\mathbb{Q}(\sqrt{\Delta_s})$). Commonly believed conjectures about distribution of ranks of twists of elliptic curves would suggest that this can be achieved, and one may easily enumerate a great lot of such values $D$, although proving an infinity result might be hard (and, in fact. it is exactly the difficulty of such problems on polynomial values with all prime divisors in some prescribed set which makes direct approaches to Theorem \ref{thm:main} problematic). Nevertheless, even calculation of a few values $w$ (together with the corresponding discriminant $\Delta_s$) already shows that, e.g., for any single prime $p$, there exist such fields $\mathbb{Q}(\sqrt{\Delta_s})$ which are additionally unramified at $p$.
\end{itemize}
\end{remark}

We furthermore note that an infinite set of values $s$ and of the associated $\Delta_s$ as above can be effectively computed for each $w$ (as the proof will show). We content ourselves with the case $w=-2/3$ (and specialization values leading to imaginary quadratic fields), for which the following explicit version may be deduced.
\begin{corollary}
\label{cor:q5}
For all integers $u$ coprime to $30$, 
 the (imaginary) quadratic number field $$\mathbb{Q}(\sqrt{-43\cdot (2^{11}\cdot 3^{10}\cdot 43^6 u^{12} - 43^3\cdot 263\cdot 883\cdot u^6 + 108)})$$ possesses an everywhere unramified $SL_2(5)$-extension.
\end{corollary}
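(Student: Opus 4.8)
The plan is to obtain Corollary~\ref{cor:q5} as the fully explicit incarnation of Theorem~\ref{thm:q5} at the single value $w=-2/3$. First I would verify the hypotheses of Theorem~\ref{thm:q5} for $w=-2/3$, i.e.\ $w_1=-2$, $w_2=3$: one computes $(-50w_1^2+27w_2^2)(10w_1^2+8w_1w_2+w_2^2)=43\cdot 1=43$, which is squarefree with $43\equiv 1\pmod 3$, so a) holds, while $-2/3\in[-0.7348,-0.645]$ gives b). (In fact $|-2/3|\in[0.645,0.7348]$, so b') also holds and the theorem even produces real-quadratic fields for this $w$; the subfamily of specialization values $s$ selected below, however, happens to yield only imaginary-quadratic ones.) Since $50w^2-27=-43/9$ and $10w^2+8w+1=1/9$ for $w=-2/3$, the quintic of Theorem~\ref{thm:q5} specializes to
\begin{equation*}
f_{-2/3,\,s}(X)=X^2(X-1)^3-\tfrac{86}{81}\,s^2\bigl(X-\tfrac{8}{9}\bigr).
\end{equation*}

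Next I would extract from the proof of Theorem~\ref{thm:q5} an explicit admissible family of values of $s$. That proof produces the good values as the rational points of a nonempty set that is open for the $\mathcal S$-adic topology, $\mathcal S$ being the finite set of exceptional primes outside of which Theorems~\ref{thm:beck} and~\ref{thm:kln} apply; for the family at hand one can take $\mathcal S=\{2,3,5\}$, the primes dividing $|S_5|$. Since the discriminant of the pencil $f_{-2/3,s}$ in $X$ factors (as one checks) as $\lambda^{3}Q(\lambda)$ with $\lambda=\tfrac{86}{81}s^{2}$ and $Q$ of degree $2$, it is natural to take $s$ to be a fixed nonzero rational multiple of $u^{3}$: then $\lambda$ is a multiple of $u^{6}$, the discriminant $\Delta_s$ becomes a rational square times a quadratic polynomial in $u^{6}$, and the $\mathcal S$-adic openness conditions at $2$, $3$, $5$ translate precisely into $\gcd(u,30)=1$ --- concretely, for such $u$ the integer under the radical below stays coprime to $2$, $3$ and $5$.

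With this substitution, a direct (if lengthy) computation, recorded in the ancillary file \texttt{sl25\_unram\_anc.txt}, shows that $\Delta_s$ equals, up to a rational square, the integer $-43\cdot\bigl(2^{11}\cdot 3^{10}\cdot 43^{6}u^{12}-43^{3}\cdot 263\cdot 883\cdot u^{6}+108\bigr)$, so that $\mathbb{Q}(\sqrt{\Delta_s})$ is exactly the field named in the corollary. By Theorem~\ref{thm:q5} (together with Lemma~\ref{lem:sqf-loccyc}) this field then possesses an everywhere unramified $SL_2(5)$-extension; moreover the quadratic $2^{11}3^{10}43^{6}v^{2}-43^{3}\cdot 263\cdot 883\,v+108$ is positive for every $v=u^{6}$ arising from a nonzero integer $u$ (its two real roots are positive but far smaller than $1$), so the field is imaginary quadratic. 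Finally, as in Theorem~\ref{thm:q5}, infinitely many of the resulting fields are pairwise distinct, because the squarefree part of the separable degree-$12$ polynomial $2^{11}3^{10}43^{6}u^{12}-43^{3}\cdot 263\cdot 883\,u^{6}+108$ is unbounded.

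The step I expect to be the main obstacle is the middle one: showing that the explicit one-parameter family $s\propto u^{3}$ with $\gcd(u,30)=1$ genuinely consists of admissible specialization values, i.e.\ that it lies inside the $\mathcal S$-adically open set furnished by the proof of Theorem~\ref{thm:q5}. Concretely, this requires checking, at the exceptional primes $2$, $3$ and $5$ (which fall outside the scope of Theorems~\ref{thm:beck} and~\ref{thm:kln}), that the root field of $f_{-2/3,s}$ still has squarefree discriminant and that its Galois closure is still locally cyclic, so that the hypotheses of Lemma~\ref{lem:sqf-loccyc} remain in force; everything beyond that is the explicit polynomial identity for $\Delta_s$ contained in the ancillary file.
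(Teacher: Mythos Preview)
Your overall strategy---specialize $w=-2/3$, pick $s$ proportional to $u^3$, and verify the hypotheses of Lemma~\ref{lem:sqf-loccyc} at the finitely many exceptional primes---matches the paper's. But there is a genuine gap in how you identify and handle the exceptional set.

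You claim $\mathcal S=\{2,3,5\}$, the primes dividing $|S_5|$. That is not what Remark~\ref{rem:explicit} says: the exceptional set for Theorems~\ref{thm:beck} and~\ref{thm:kln} also contains every prime dividing the leading coefficient of the $X$-discriminant of $f_{-2/3,s}$ and every prime modulo which distinct roots of that discriminant collide. A direct computation (using the normalized model $f(s,X)=X^2(X-1)^3-86s^2(9X-8)$) gives $\mathcal S_0=\{2,3,5,43,97\}$. Missing $43$ is fatal: the factor $-43$ in your radical shows that $43$ ramifies in \emph{every} quadratic field in the family, so you cannot appeal to ``unramified $\Rightarrow$ no obstruction'' there; you must check by hand that the local behavior at $43$ is transposition-inertia with cyclic decomposition. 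The paper does this (and the analogous checks at $2,3,5,97$) via explicit mod-$p$ reductions of $f(s_0,X)$ after a linear change of variable, exhibiting a separable cubic with the right number of roots. Note also that this step is sensitive to the precise constant of proportionality: the paper takes $s_0=86u^3$, and the factor $86$ is what makes the mod-$43$ reduction come out right.

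Two further points. First, your coprimality observation---that the integer under the radical is prime to $2,3,5$ when $\gcd(u,30)=1$---only shows these primes are unramified in the \emph{quadratic} subfield; it does not by itself establish that they are unramified (or have transposition inertia) in the full $S_5$-extension, which is what Lemma~\ref{lem:sqf-loccyc} needs. Second, Theorem~\ref{thm:q5} only asserts existence of infinitely many good $s$; to get the corollary for \emph{all} $u$ coprime to $30$ you must separately verify $\mathrm{Gal}(f(s_0,X)/\mathbb{Q})=S_5$ for each such $u$, which the paper does via the factorization patterns of $f(s_0,X)$ modulo $3$ and $5$.
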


\begin{proof}[Proof of Theorem \ref{thm:q5}]
Begin with the family $f(a,t,X) = X^2(X-1)^3 - t(X-a)$. 
%
%
% Notable property: Pullback by quadratic is still genus-0!!! (namely (3), (2.2), 4 transpositions)
% one possible parameterization:  s;= a/c^2*(u^2 -8/9*((w-1/4)*(w+1/2)/(c*w^2))) / (u*(u^2-1/c)^2);   %%% COMPARISON OF DISCRIMINANT SHOWS THIS IS DOUBTFUL; MAYBE CONSTANT FACTOR WRONG....
% concretely, when w=-1/4, this gives...?  [Is it visible without evaluating that 43 is the only universally ramified prime here?]
% Residue field at s=0 would be ramified only at 2,3 and 43.
% Exceptional set for this extension, as in KN20, would be 2,3,5,43,643,9649.

% For original extension (before pullback) would have been 2,3,5,43,97. Does this mean the two large primes above are not relevant?
% E.g. assume above 643 ramifies (specialization s->s0 meaning t->86 s_0^2 below) then 643 must divide m(86*s_0^2).
% checked: 643 splits in quadr. subextension (= in residue field of the quadruple); and 9649 cannot even ramify with transp. inertia due to being inert in starting ext....
% 
% NOTE: Prime 97 DOES behave ``badly" when ramifying, so this needs to be taken into account when describing explicit sets!!!
%
This yields an $S_5$-extension $E/\mathbb{Q}(a)(t)$ ramified at $t=0$ (with inertia group generator of cycle structure $(3.2)$), at $t\mapsto \infty$ (with inertia group generated by a $4$-cycle) and at the two roots $\lambda_1,\lambda_2$ of an irreducible quadratic polynomial $g_a(t)$ (with inertia group generated by a transposition). One may verify that the residue extensions in $E$ at $t\mapsto \lambda_i$ are $S_3$-extensions (i.e., the decomposition group at each of these branch points is isomorphic to $C_2\times S_3 \le S_5$). 

{\it Step 1: Shrinking of residue extensions via pullback}: 
In view of our observations at the beginning of this section, 
we would rather prefer those decomposition groups to be contained inside $C_2\times C_3$. Following the approach of \cite{Koe21}, we hope to arrive at such a scenario via a suitable quadratic pullback $t=t(s) = c\cdot s^2$ (for a nonzero $c$ to be determined). This will not succeed generically, but can be translated into a very explicit system of equations, solvable via standard Groebner basis methods, which then parameterizes those rational values of $a$ (and the corresponding values of $c$) for which the pullback {\it does} succeed at shrinking the decomposition group. Concretely, the branch points of the ``pulled-back" extension $E(s)/\mathbb{Q}(a)(s)$ are $s=0, \infty$, and the roots of $cs^2=\lambda_i$ ($i=1,2$)\footnote{Note that it is ``generically" sufficient to do all the following for $i=1$, since the same result for $i=2$ follows from algebraic conjugacy of the two branch points. We will verify a while later that this algebraic conjugacy will in fact be preserved for {\it all} rational values of $a$ relevant to us.}.

%\marginpar{This footnote seems safe only for values $w$ which eventually do not make both branch points rational!}
% Discriminant calculation yields necessary condition for branch points to be rational:
% (97w^2+64w+10)(w^2+16w+10) = y^2
% Rank-0 elliptic curve with only 4 rational points: (0,\pm 10) and (-1/2, \pm 9/4). ---> no new exceptions!

 On the other hand, it is easy (using Magma) to find the discriminant $\Delta_i \in \mathbb{Q}(a,\lambda_i)$ of the residue extension at $t\mapsto \lambda_i$ in $E/\mathbb{Q}(a)(t)$; the $2$-part of this residue extension is then of course simply $\mathbb{Q}(a,\lambda_i)(\sqrt{\Delta_i})/\mathbb{Q}(a,\lambda_i)$, and since $\mathbb{Q}(a)(s)/\mathbb{Q}(a)(t)$ is unramified at $t\mapsto \lambda_i$, the $2$-part of the residue extension at $t\mapsto \lambda_i$ in the whole compositum $E(s)/\mathbb{Q}(a)(t)$ is the compositum of quadratic extensions $\mathbb{Q}(a,\lambda_i)(\sqrt{\Delta_i}, \sqrt{c\lambda_i})/\mathbb{Q}(a,\lambda_i)$.
 In particular, if $\mathbb{Q}(a,\lambda_i)(\sqrt{\Delta_i}) = \mathbb{Q}(a, \sqrt{c\lambda_i})$ -- which of course translates to $c\lambda_i\Delta_i$ being a non-zero square in $\mathbb{Q}(a,\lambda_i)$ --  then the residue extensions at the branch points $s\mapsto \pm \sqrt{\lambda_i/c}$ of $E(s)/\mathbb{Q}(a)(s)$ are of odd degree. Since $\mathbb{Q}(a,\lambda_i) = \mathbb{Q}(\sqrt{p(a)})$ for a suitable polynomial $p(a)\in \mathbb{Z}(a)$, we can explicitly write $c\lambda_i\Delta_i = \alpha_1 + \alpha_2\sqrt{p(a)}$ (with $\alpha_i\in \mathbb{Q}(a)$), and equate coefficients (at $1$ and $\sqrt{p(a)}$) in the equation $(\alpha_1+\alpha_2\sqrt{p(a)}) = (u + v\sqrt{p(a)})^2$, yielding a reasonably managable system of equations in the unknowns $a,c,u,v$ (indeed, since $c$ is unique only up to squares, we may even demand $u=1$). It turns out that solutions of this system are parameterized by a (rational) curve of genus $0$, and it is then standard (again, when assisted by a computer) to find a parameter $w$ of this curve. We are therefore able to express $a$ and $c$ as rational functions in $\mathbb{Q}(w)$. In our case, we concretely obtained $a=2w^2$,
%
% Should remove denominator from $a$ via transformation in s? (if so, careful with later concrete value!)
%
 and $c=2\cdot(27-50w^2)(10w^2+8w+1)$, which yields exactly the polynomial $f_{w,s}(X)$ in the assertion of the theorem. 
\iffalse

Possibly more principled argument using explicit residue extension at 0 after suitable pullback (compatible with above):
 t = x^3*(x-1)^2/(9*w^2*x - (w^2-8*w-2))
     
     --:> x->0;  -(w^2-8w-2)t = x^3
           x->1:  (8w^2+8w+2)t = y^2
           
           ---> Q(zeta_3)(( (8w^2+8w+2)^3/(w^2-8w-2)^2 \sqrt[6]{t}))
           
           PB by t=c^3s^6    ---> Q(zeta_3, ((43w^2-200w-50)^3(97w^2+64w+10)^3*(w^2-8w-2)^2/(2w+1)^6)^{1/6}) = Q(zeta_3, c^{1/2}*(w^2-8w-2)^{1/3})
           Suffices that the ``non-square part" under the root is positive with all prime divisors of odd multiplicity being p=1 mod 3.
           Corresponds to quadratic twists of g=1 curve Y^2 = (43w^2-200w-50)(97w^2+64w+10)
           
           How much is known here? Enough to give a result on ``fixed finite set unramified" for the quadratic field?!?

\fi

We may now plug in rational values for $w\in \mathbb{Q}$ in the polynomial $f_w(s,X)$. Apart from finitely many values $w$,\footnote{Concretely, an exception occurs only for values $w$ lying on the boundary of the underlying ``moduli space", i.e., which make two or more branch points of the extension coalesce. Calculation of the discriminant shows quickly that, in fact, this does not happen for any rational $w\ne 0$.} this parameterizes a $\mathbb{Q}$-regular $S_5$-extension, say $F^w/\mathbb{Q}(s)$. We can also verify now that the two non-zero finite branch points of the extension given by $X^2(X-1)^3-t(X-2w^2)=0$ are in fact {\it always} algebraically conjugate (a generically fulfilled assumption which we used in the calculations above): indeed, straightforward calculation of discriminants shows that $w$-values for which both these points become rational would have to correspond to rational points on the curve $Y^2 = (10W^2+8W+1)(10W^2-8W+1)$; 
 this is isomorphic to the elliptic curve $Y^2=X(X-15)(X-24)$, and Magma confirms that this elliptic curve has rank $0$ and no rational points apart from the $2$-torsion - these are the points corresponding to $w=0$ and $w=\infty$, which are already excluded anyways.

{\it Step 2: Local behavior of specializations (one prime at a time)}: 
So far, we have worked on the geometric level, and basically (up to killing ramification at $0$ and $\infty$, which is easily achieved) obtained extensions fulfilling the assumptions of  Lemma \ref{lem:sqf-loccyc} over $\mathbb{Q}(s)$ instead of $\mathbb{Q}$. The tool to pull these results back to $\mathbb{Q}$ is Theorem \ref{thm:kln}, but we face the problem that there are always finitely many ``exceptional" primes. 
We will now make some additional considerations on residue extensions of the extension $F^w/\mathbb{Q}(s)$, in order to control the exceptional primes in Theorem \ref{thm:kln}. 
Firstly, the decomposition group at $t\mapsto 0$ of the splitting field of $g(t,X):=X^2(X-1)^3-t(X-a(w))$ over $\mathbb{Q}(t)$ is the split extension $D_6 = C_6\rtimes C_2$ of the inertia group $\langle(1,2)(3,4,5)\rangle \cong C_6$. The completion of the above splitting field over $t\mapsto 0$ therefore contains a totally ramified degree-$6$ subextension, and it is well known that such an extension is necessarily of the form $\mathbb{Q}((\sqrt[6]{bt}))/\mathbb{Q}((t))$ for some $b\in \mathbb{Q}^\times$, see, e.g., \cite[pp.52--53]{Lang}. Furthermore, since the residue extension at a branch point of ramification index $e$ always contains the $e$-th roots of unity, this residue extension must equal $\mathbb{Q}(\zeta_3)$. The completion at $t\mapsto 0$ therefore equals $\mathbb{Q}(\zeta_3)((\sqrt[6]{bt}))$ for some $b\in \mathbb{Q}^\times$.  
To determine the value of $b$, one first lets $X\to 0$ and then $Y:=X-1\to 0$, to see that the completion contains the fields $\mathbb{Q}((\sqrt{at}))$ as well as $\mathbb{Q}((\sqrt[3]{(1-a)t}))$, making the completion in total equal to $\mathbb{Q}(\zeta_3)((\sqrt[6]{\frac{a^3}{(1-a)^2}t}))$. 
%Updated:t=-x^2/-a, i.e., root(at), and t=y^3/(1-a)

Now let the transcendental $r$ fulfill $r^6=c(w)\cdot \gamma^2 t$ (with $\gamma\ne 0$ to be determined). 
The splitting field of $g(t(r),X) = X^2(X-1)^3-\frac{r^6}{c\gamma^2}(X-a) = X^2(X-1)^3 - c(\frac{r^3}{c\gamma})^2(X-a)$ over $\mathbb{Q}(r)$ is then clearly also a pullback of $F^w/\mathbb{Q}(s)$; 
it is furthermore unramified at $r\mapsto 0$ due to Abhyankar's lemma (since $\mathbb{Q}(r)/\mathbb{Q}(t)$ is totally ramified of ramification index $6$ at $t\mapsto 0$). Combining with Krasner's lemma (namely, specializing $s=s(r)\in \mathbb{Q}(r)$ at values $r\mapsto r_0$ sufficiently close to $0$ in the $p$-adic topology), we can then conclude that every completion (at some prime $p$) of $(F^w(r))_{r\mapsto 0}/\mathbb{Q}$ is also the completion of some specialization $F^w_{s\mapsto s_0}/\mathbb{Q}$ at some {\it non-branch-point} $s_0\in \mathbb{Q}$ of $F^w/\mathbb{Q}(s)$. To get hold of what this completion looks like, we next identify  $\textrm{Gal}((F^w(r))_{r\mapsto 0}/\mathbb{Q})$ as a subgroup of $\textrm{Gal}(F^w(r)/\mathbb{Q}(r))\cong S_5$. 
The completion at $t\mapsto 0$ of the whole Galois closure of $F^w(r)/\mathbb{Q}(t)$ is the compositum of $\mathbb{Q}(\zeta_3)((\sqrt[6]{\frac{a^3}{(1-a)^2}t})) = \mathbb{Q}(\zeta_3)((\sqrt[6]{\frac{a^3}{c\gamma^2(1-a)^2}}\cdot r))$ and $\mathbb{Q}((r))$, and hence equal to $\mathbb{Q}(\zeta_3, \sqrt[6]{\frac{a^3}{c\gamma^2(1-a)^2}})((r))$. Setting now $\gamma:=(1-a)^2c$ and noting that $2a = 4w^2$ is a square, we see that the residue extension of $F^w(r)/\mathbb{Q}(r)$ at (the non-branch point) $r\mapsto 0$ becomes the (at most) biquadratic extension $\mathbb{Q}(\zeta_3, \sqrt{\frac{c}{2}}) = \mathbb{Q}(\zeta_3, \sqrt{(-50w^2+27)(10w^2+8w+1)})$.
Note that any prime ramifying in this extension is either a divisor (of odd multiplicity) of $c/2$, or equal to one of $2$, $3$ or $\infty$. 
Redoing the above calculation individually for the completions of the (degree-$5$) extension $\mathbb{Q}(x)/\mathbb{Q}(t)$ at the two places extending $t\mapsto 0$ (namely, $x\mapsto 0$ and $x\mapsto 1$), one furthermore gets an embedding of the decomposition group $\textrm{Gal}((F^w(r))_{r\mapsto 0}/\mathbb{Q}) \cong C_2\times C_2$ into $S_5$ identifying $\textrm{Gal}(\mathbb{Q}(\sqrt{c/2})/\mathbb{Q})$ with $\langle(1,2)\rangle$ and $\textrm{Gal}(\mathbb{Q}(\zeta_3)/\mathbb{Q})$ with $\langle (3,4)\rangle$ (up to conjugation in $S_5$).
 
 Assume now, as we have done in condition a) of the theorem, that we can find $w$ such that 
all primes $p$ of odd multiplicity in $c/2$ are congruent to $1$ mod $3$. Then $p$ splits in $\mathbb{Q}(\zeta_3)$ and conversely $\sqrt{|c/2|}$ is in $\mathbb{Q}_3$. But note that condition b) on the values $w$ ensures that $c>0$, whence $3$ is indeed split in $\mathbb{Q}(\sqrt{c/2})$. 
%Instead can remove extra considerations on infinity!
 We have therefore obtained that all the finite ramified primes of $F^w(r)_{r\mapsto 0}/\mathbb{Q}$ dividing $c/2$, and also the prime $3$, have decomposition group generated by a transposition in this specialization.
%\marginpar{Need identification of subgroups of the decomposition group for this!! Treat two completions (at $x$ and $1-x$) in the degree-$5$ extension individually!}
%up to linear transformation, one gives x^3=r^6, and the other one y^2=(c/2)r^6

Moreover, there exist values $s_0$ such that $F^w_{s_0}/\mathbb{Q}$ is unramified at $2$; indeed, one verifies computationally (see the ancillary file) that, for $w_1$ odd and $w_2$ even (resp, $w_1$ even and $w_2$ odd), the discriminant of $F^w_{s\mapsto (w_2^3)/4}/\mathbb{Q}$ (resp., of $F^w_{s\mapsto 2w_2^3}$) is equal to the product of a square and an integer congruent to $1$ modulo $4$; finally, when $w_1$ and $w_2$ are both odd, the same holds for one of the two specializations $F^w_{s\mapsto 2w_2^3}/\mathbb{Q}$ and $F^w_{s\mapsto 6w_2^3}/\mathbb{Q}$, i.e., in all cases, we have identified a specialization $F^w_{s_0}/\mathbb{Q}$ in whose quadratic subextension the prime $2$ is unramified.
  
 On the other hand, when $w_2$ is odd and $s_0=2w_2^3$ (and same for $s_0=6w_2^3$),
 the defining polynomial $f_{w,s_0}(X)$ factors mod $2$ after suitable linear shifts as $\frac{1}{8} f_{w_1/w_2, 2w_2^3}(1-2X) = (2X-1)^2\cdot (-X)^3+(50w_1^2-27w_2^2)(10w_1^2+8w_1w_2+w_2^2)(w_2^2(1-2X)-2w_1^2) \equiv X^3+1$ mod $2$, which is a separable cubic. This means that over $\mathbb{Q}_2$ this polynomial has exactly three roots of non-negative valuation, all in the unramified extension. Hence the inertia group at $2$ in this specialization is at most cyclic of order $2$, generated by a transposition (and the same result follows analogously for the other specializations considered in the previous paragraph). But since we have also just seen that this inertia group must be contained in $A_5$, we conclude altogether that $2$ is unramified in this specialization. The case ``$w_2$ even" is easier, since here the specialization $f_{w_1/w_2, w_2^3/4}(X)$ is congruent, up to constant, to a separable quintic mod $2$.

Finally, the condition b') ensures the existence of values $s_0$ such that complex conjugation in $F^w_{s_0}/\mathbb{Q}$ is represented by the identity, i.e., the extension is totally real. To verify this claim, one need not try out infinitely many values $w$; rather, for a given $w$, the cycle type of conjugation is locally constant as $s_0$ moves through $\mathbb{P}^1(\mathbb{R})$ minus the set of branch points of $F^w$; and furthermore, it is computable in a purely group-theoretical way in terms of the ``branch cycle description" of the cover corresponding to $F^w/\mathbb{Q}(s)$ (see, e.g., \cite[Chapter I, Theorem 10.3]{MM} for a precise formula). This branch cycle description is again locally invariant and can change only when $w$ passes through a boundary point of the ``Hurwitz space" underlying our family of extensions, i.e., when $w$ is specialized such that two or more branch points collide. Via a calculation of discriminants, the problem thus reduces to evaluating a finite number of $w$ and, for each $w$-values, a finite number of $s$-values, counting the number of real roots of $f_{w,s}(X)$ for each choice.

%Since the problem corresponds to finding rational points on certain twists of the genus-$1$ curve $Y^2 = (43w^2-200w-50)(97w^2+64w+10)$, there should indeed be infinitely many $w$ fulfilling this; existence of {\it some} $w$ is easy to verify.

%Deal with bad primes 2 and infinity separately.
%maybe at 2, general solution when w is odd, similarly as for case b) below.
% X^3(X-1)^2-ct^2(9w^2X-(w^2-8w-2)) ---> discriminant over Z[w,t]; when w and t are odd [also: when numerator of w is odd, and t is odd!!], this becomes 1 mod 4 (up to squares), so 2 is unramified in quadratic!
% On the other hand, factorization pattern of f(2X) shows in.gp <= <(1,2)>, and so altogether: "unramified at 2"!

% For infinity: totally real only in rather small segments (e.g. vicinity of w=-1/4)...
% Argument for existence of ``transposition conjugation"? (Clear as soon as one of the transposition branch points is real)

%
% Sufficient conditions on w to allow desired specializations; (put this in theorem statement!!)

{\it Step 3: Local behavior of specializations (all primes simultaneously)}: 
We are now ready to invoke Theorem \ref{thm:kln}. Let $\mathcal{S}_0$ be the exceptional set (as in Theorem \ref{thm:kln}) of finite primes for the extension $F^w/\mathbb{Q}(s)$. We know already that, for each $p\in \mathcal{S}_0$, there exists a non-branch-point $s(p)\in\mathbb{Q}$ such that $F^w_{s(p)}/\mathbb{Q}$ is either unramified at $p$, or tamely ramified with cyclic decomposition group generated by a transposition.\footnote{In fact, we have seen that we can avoid ramification for all $p\in \mathcal{S}_0$, except possibly $3$, $\infty$ and the primes of odd multiplicity in $c/2$.} Combining with Krasner's lemma, we obtain the existence of a non-empty $\mathcal{S}_0$-adically open set (i.e., intersection of $p$-adic neighborhoods for $p\in \mathcal{S}_0$) of values $s_0$ such that $F^w_{s_0}/\mathbb{Q}$ fulfills the above for all primes $p\in \mathcal{S}_0$ simultaneously. We may even assume additionally that $s_0$ is of the form $s_0=u^3/v^2$ with $u,v\in \mathbb{Z}$ coprime, since the set of these numbers is easily seen to be dense in every $\mathbb{Q}_p$.\footnote{This last assertion is true also for $p=\infty$, whence, under the extra assumption of condition b'), we may also assume the specialization $F^w_{s_0}$ to be totally real up to suitable choice of $s_0$.} This extra assumption has the effect that, when applying Theorem \ref{thm:beck}, primes outside of $\mathcal{S}_0$ at which $s_0$ meets either the branch point $s\mapsto 0$ or the branch point $s\mapsto \infty$ will remain unramified in $F^w_{s_0}/\mathbb{Q}$ (since the ramification indices at these two branch points are $3$ and $2$ respectively). Ramified primes, other than primes in $\mathcal{S}_0$, will therefore all have inertia group generated by a transposition, corresponding to the inertia group at any finite non-zero branch point of $F^w/\mathbb{Q}(s)$. But at those branch points, by the construction in Step 1, the decomposition group is contained in $C_2\times C_3$; due to Theorem \ref{thm:kln}, the same will therefore hold for all ramified primes (outside of $\mathcal{S}_0$) of $F^w_{s_0}/\mathbb{Q}$.  Due to the well-known compatibility of Hilbert's irreducibility theorem with weak approximation, we may even assume $F^w_{s_0}/\mathbb{Q}$ to have full Galois group $S_5$ for infinitely many such $s_0$. It is also immediately obvious that there are infinitely many distinct quadratic subextensions inside these infinitely many specializations (for example, suitable choice of $s_0$ yields arbitrarily large primes with ``transposition inertia").

%Want this value to be of form s_0 = u^3/v^2?

{\it Step 4: Solving the embedding problem}:  
In total, we have obtained infinitely many (tamely ramified) specializations $F^w_{s_0}/\mathbb{Q}$ in which all non-trivial inertia groups are generated by transpositions, and all decomposition groups at such primes are cyclic (of order $2$ or $6$). 
%By Proposition \ref{lem:prime_kernel}, the embedding problem is (properly, since the extension is non-split) solvable, and by Proposition \ref{prop:serre}, the solution field $L/\mathbb{Q}$ may even be chosen such that $L/F^w_{s_0}$ is unramified at all finite primes.
By Lemma \ref{lem:sqf-loccyc}, each such specialization embeds into a quadratic extension $L\supseteq F^{w}_{s_0}$ such that, with $k:=\mathbb{Q}(\sqrt{\Delta(F^w_{s_0})})$, the extension $L/k$ is Galois of group $SL_2(5)$ and unramified at all finite primes.
 When $k$ is imaginary, we are already done. Otherwise, we may need to change $L$ slightly to make $L/k$ unramified also at the infinite primes. Note that in this case, complex conjugation in $F^w_{s_0}/k$ would have to be represented by an element of $A_5$, i.e., by the identity (the case of a double transposition as complex conjugation does not occur, since this would be a local obstruction to the embedding problem, which we have already excluded).

% actually success regarding exceptional set comes with specialization s_0=c*u^3/v^2...
% specifically for prime 43, this should mean "transposition ramification" could be recognized in the original (not translated) extension from t=0!
% (and here, dec.gp. is in C_6 as soon as prime is (not in exceptional set and) =1 mod 3.
%

So we are left with the case that $F^w_{s_0}$ is totally real, but $L$ is not any more. In this case, we can twist $L/k$ to obtain a totally real (i.e., unramified at the archimedean prime) extension without changing ramification at other primes. 
%\marginpar{Now, need to adapt to general situation!!}
%should take k(\sqrt{-p})/k, for any prime divisor p=3 mod 4 of the discriminant; but does that always exist??
% Only know that disc(k) is 1 mod 4 %%% Need extra condition?
%discriminant in total is -(c/2)*p(w,t), where p(w1,w2,t) mod 4 equals w1^12*t^2; also c/2 has two factors, one of which is  = 3 mod 8! ---> at least one prime divisor 3 mod 4, at least as soon as this factor is positive?!?!
%
% For w in relevant interval, both factors of c/2 are positive! (which means p(w1,w2,t) must be negative)
% But also, last factor is negative, so its absolute value is 3 mod 4! Could there be cancellation?
%
Recall that, from our considerations in Step 2, we may assume that $s_0$ is chosen such that all primes dividing $(-50w_1^2+27w_2^2)(10w_1^2+8w_1w_2+w_2^2)$ an odd number of times have transposition inertia in $F^w_{s_0}/\mathbb{Q}$, and are therefore in particular ramified in $k/\mathbb{Q}$. 
But note that, since at least one of $w_1, w_2$ is odd, the above product always evaluates either to an integer congruent to $3$ mod $4$ or to four times such an integer.
Also, in the intervals allowing totally real specializations, both quadratic factors take only positive values. This means that there is necessarily at least one prime $p\equiv 3$ mod $4$ ramifying in $k/\mathbb{Q}$. We may then take take the compositum $\widehat{L}$ of $L$ and $k(\sqrt{-p})$. This is an $SL_2(5)\times C_2$-extension of $k$, and if $D\cong C_2$ denotes the diagonal subgroup of the center of the group $SL_2(5)\times C_2$, then $\tilde{L}^D/k$ is again an $SL_2(5)$-extension. Since $k/\mathbb{Q}$ is already ramified at $p$, the extension $k(\sqrt{-p})/k$ is ramified exactly at the archimedean primes. Hence, all non-archimedean primes still remain unramified in $\tilde{L}^{D}/k$, whereas for the archimedean ones,\footnote{Note that both archimedean primes of $k$ must behave identically in $L$, since $L/\mathbb{Q}$ is Galois.} the fact that $L/k$ and $k(\sqrt{-p})/k$ have the same completion (namely $\mathbb{C}$) means that $\tilde{L}^{D}\subset \mathbb{R}$. This completes the proof.
\end{proof}
 
Note once again that, using the above proof, Theorem \ref{thm:q5} can be made fully effective in the sense that, for each $w$ fulfilling the assumptions, an explicit infinite set of values $s_0$ (and hence, via calculation of discriminants, an explicit set of quadratic fields fulfilling the assertion) can be computed; indeed, as we have seen, all $s_0 =\frac{u^3}{v^2}$ with $u,v\in \mathbb{Z}$ in some suitable intersection of finitely many $p$-adic neighborhoods are admissible. Making these neighborhoods explicit is also possible, although the result might not look too attractive.

For the proof of the explicit assertion of Corollary \ref{cor:q5}, it suffices to investigate the local behavior of the specializations $F^w_{s_0}/\mathbb{Q}$ (with $w=-2/3$) at the ``exceptional" primes % in $\mathcal{S}_0$
 somewhat more closely.
 \begin{proof}[Proof of Corollary \ref{cor:q5}]
 Setting $w=-2/3$ in the notation of the proof of Theorem \ref{thm:q5}, we have $a(w)=8/9$ and $c(w)=86/81$. Up to multiplying $s$ with a suitable constant, we thus have the defining polynomial $f(s,X) = X^2(X-1)^3 - 86s^2(9X-8)$ for the extension $F^w/\mathbb{Q}(s)$.
 Using Remark \ref{rem:explicit}, a calculation of discriminants shows that the set of exceptional primes (in the sense of Theorem \ref{thm:kln}) is here $\mathcal{S}_0\cup \{\infty\}$ with $\mathcal{S}_0 = \{2,3,5,43,97\}$.
 We want to specialize $s\mapsto s_0 = 86 u^3$ with $u\in \mathbb{Z}$ coprime to $30$, which (again, by a discriminant calculation) gives exactly the quadratic subfields as in the assertion of the corollary. Just as in the proof of Theorem \ref{thm:q5}, this specialization ``kills" ramification inherited from branch points with non-transposition inertia, and we conclude that $F^w_{s_0}/\mathbb{Q}$ will have no local obstruction for the central embedding problem at finite primes outside of $\mathcal{S}_0$. We are therefore reduced to an investigation of local behavior at those finitely many exceptional primes. 
 %At infinity, there will be no obstruction, since the quadratic subfield is in fact always imaginary here, meaning that the decomposition group at infinity is generated by a transposition in $S_5$. 
 
 %Obvious from factorization pattern for 5.

%for 2: either separable, or (after shift) separable degree 3 factor; but what if in.gp. <(1,2)>?
% unramifiedness clear from shape \mathbb{Q}(\sqrt{-43\cdot (2^{11}\cdot 3^{10}\cdot 43^6 u^{12} - 43^3\cdot 263\cdot 883\cdot u^6 + 108)}) ?
% Indeed, since expression under root is 1 mod 4!!!!

% for 97: seems either separable or reduction mod 97 (after shift) separable cubic with three roots --> at worst transposition dec.gp.

%whereas for the prime $43$, we already verified above that no obstruction occurs for the values $s_0$ in question. 
%\marginpar{Rewrite!}
Regarding the prime $43$, note that if $s_0=86(43^{d} u')^3$ ($u'\in \mathbb{Z}$ coprime to $43$, and $d\ge 0$), then the mod-$43$ reduction of the polynomial $\frac{1}{43^{6d+3}} f(s_0, 1-43^{2d+1}X)$ 
 is the separable cubic polynomial $-X^3+-2^3u'^6$ with three roots. This means that ${\textrm{Gal}}(f(s_0,X)/\mathbb{Q}_{43})$ is either trivial or generated by a transposition (in fact, since $43$ divides the discriminant of $F^w_{s_0}$, the latter is the case), meaning that for these values $s_0$, there is no local obstruction at $43$. The same reasoning works for the primes $2$, $3$, $5$ and $97$ (in fact, one may even additionally deduce that these primes are always unramified under the given conditions, as the explicit expression for the discriminant of the quadratic subfield shows).

It thus remains to verify that $\textrm{Gal}(F^w_{s_0}/\mathbb{Q})$ is always all of $S_5$. With our specifications, $f(s_0,X)$ is always separable modulo $5$, and either irreducible or divisible by an irreducible degree-$4$ factor; similarly, $f$ is always separable modulo $3$, with irreducible factors of degree $2$ and $3$. 

Now $\textrm{Gal}(F^w_{s_0}/\mathbb{Q})=S_5$ follows from standard results, since a subgroup of $S_5$ containing an element of cycle structure $(3.2)$ and one of cycle structure either $(5)$ or $(4.1)$ is necessarily all of $S_5$.
\end{proof}

\appendix
\section{Remarks on the case $G=SL_2(9)$}
We conclude by noting that the above approach could in principle work analogously to solve Conjecture \ref{conj:1} also for the group $G=SL_2(9)$ In particular, due to the isomorphism $PSL_2(9) \cong A_6$, one may proceed in analogy to the previous section, by first producing Galois extensions of $\mathbb{Q}$ with Galois group $S_6$ and suitable local behavior, and then embedding them into extensions with group $2.S_6$, the degree-$2$ central extension of $S_6$ in which the transpositions are split. The latter group has an index-$2$ normal subgroup isomorphic to $SL_2(9)$. We thus have a sufficient criterion for the solvability of the embedding problem similar to Lemma \ref{lem:sqf-loccyc}:
\begin{lemma}
Let $\Omega/\mathbb{Q}$ be a Galois extension with group $S_6$ such that
\begin{itemize}
\item[1)] all inertia groups at ramified primes of $\Omega/\mathbb{Q}$ are generated by transpositions, and
\item[2)] all decomposition groups at ramified primes are conjugate in $S_6$ to one of $\langle(1,2)\rangle$, $\langle(1,2), (3,4)(5,6)\rangle$ and $\langle(1,2), (3,4,5)\rangle$.
\end{itemize}
Then, if $k\subset \Omega$ denotes the fixed field of $A_6$, the extension $\Omega/k$ embeds into an $SL_2(9)$-extension unramified at all finite primes.
\end{lemma}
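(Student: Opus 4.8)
The plan is to adapt the proof of Lemma~\ref{lem:sqf-loccyc} almost verbatim, replacing $S_5$, $A_5$, $2.S_5$ by $S_6$, $A_6$, $2.S_6$. Write $\varphi\colon G_{\mathbb{Q}}\to S_6$ for the epimorphism with fixed field $\Omega$, and $\varepsilon\colon 2.S_6\to S_6$ for the central extension with kernel $C_2=\langle z\rangle$ in which transpositions split, so that $\varepsilon^{-1}(A_6)\cong 2.A_6\cong SL_2(9)$ is an index-$2$ normal subgroup. One first wants to solve the central embedding problem $(\varphi,\varepsilon)$ --- this will be done through Proposition~\ref{lem:prime_kernel} and Remark~\ref{rem:prime_kernel} --- and then to invoke Proposition~\ref{prop:serre} to produce a solution field $L\supseteq\Omega$ with $L/k$ unramified at every finite prime; since $2.S_6\to S_6$ is non-split, any solution is automatically proper, whence $\Gal(L/k)=\varepsilon^{-1}(A_6)\cong SL_2(9)$.

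The key step is to show that $(\varphi,\varepsilon)$ has no local obstruction at any finite prime. At primes unramified in $\Omega/\mathbb{Q}$ there is nothing to do. So let $p$ be ramified, necessarily with inertia group $\langle\tau\rangle$ for a transposition $\tau$ (condition 1) and decomposition group $D$ one of $\langle(1,2)\rangle$, $\langle(1,2),(3,4)(5,6)\rangle$, $\langle(1,2),(3,4,5)\rangle$ (condition 2). For $D\cong C_2$ there is nothing to prove, as transpositions split in $2.S_6$. For $D\cong C_6=\langle(1,2)(3,4,5)\rangle$ one checks, using that disjoint transpositions anticommute in $2.S_6$, that the lifts of the commuting permutations $(1,2)$ and $(3,4,5)$ commute, the latter having order $3$; hence $D$ lifts isomorphically into $2.S_6$ and the local problem splits. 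The only delicate case is $D\cong C_2\times C_2=\langle(1,2),(3,4)(5,6)\rangle$, where $D$ does \emph{not} lift isomorphically: a product of two disjoint transpositions has no lift of order $2$, and in fact $\varepsilon^{-1}(D)\cong C_2\times C_4$. Nevertheless the local embedding problem at $p$ remains solvable, because the ``order-$4$ direction'' of $\varepsilon^{-1}(D)\to D$ is precisely the quotient $D\to D/\langle\tau\rangle$, and the latter is the unramified (Frobenius) quotient of the local Galois group; thus the relevant obstruction class is the obstruction to embedding the unramified quadratic extension of $\mathbb{Q}_p$ into a $C_4$-extension, which is trivial since $\mathbb{Q}_p$ has unramified extensions of every degree. (This reasoning is uniform in $p$, and in particular handles the wildly ramified case $p=2$ automatically.) Hence $(\varphi,\varepsilon)$ is solvable.

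Finally, for each finite prime $p$ one picks a local solution $\tilde\varphi_p$, taking it unramified whenever $p$ is unramified in $\Omega$; at a prime ramified in $\Omega$ the restriction $\tilde\varphi_p|_{I_p}$ is forced to land in a subgroup of order $2$, since every lift of $\tau$ has order $2$ (the two lifts being $\tilde\tau$ and $z\tilde\tau$, neither equal to $z$). Proposition~\ref{prop:serre} then furnishes a global solution whose inertia subgroup at every finite prime of $L/\mathbb{Q}$ coincides with some such $\tilde\varphi_p|_{I_p}$; in particular $z$ lies in no finite inertia subgroup $\tilde I$ of $L/\mathbb{Q}$, so $\tilde I$ meets $\Gal(L/k)=\varepsilon^{-1}(A_6)$ trivially: an element of $\tilde I\cap\varepsilon^{-1}(A_6)$ has $\varepsilon$-image in $\langle\tau\rangle\cap A_6=\{1\}$, hence lies in $\ker\varepsilon=\langle z\rangle$, hence in $\tilde I\cap\langle z\rangle=\{1\}$. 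Thus $L/k$ is unramified at all finite primes, as required.

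The main obstacle will \emph{not} be this argument, which is essentially bookkeeping once one has disentangled the structure of $2.S_6$ over the three admissible decomposition groups; rather, exactly as Lemma~\ref{lem:sqf-loccyc} does not address the corresponding point for quintics, the genuinely hard part is producing an $S_6$-extension $\Omega/\mathbb{Q}$ actually satisfying conditions 1) and 2). This would call for an analogue of the four-branch-point specialization strategy of Section~\ref{sec:proof} --- now with the extra local condition 2) permitting $C_2\times C_2$ in addition to $C_2$ and $C_6$ as a decomposition group --- but almost certainly requiring considerably more computational effort than the $SL_2(5)$ case.
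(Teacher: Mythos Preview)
Your proof is correct and follows essentially the same route as the paper's: verify that each of the three admissible decomposition groups poses no local obstruction (the first and third split in $2.S_6$, the second lifts to $C_2\times C_4$ with the obstruction absorbed by the unramified $C_4$-extension), invoke Proposition~\ref{lem:prime_kernel} and Remark~\ref{rem:prime_kernel} for solvability, and then Proposition~\ref{prop:serre} to kill extra ramification. You supply more explicit group-theoretic justification (the anticommutation of disjoint transpositions yielding $\varepsilon^{-1}(D)\cong C_2\times C_4$, and the clean computation $\tilde I\cap\varepsilon^{-1}(A_6)=\{1\}$) than the paper's terse ``as for Lemma~\ref{lem:sqf-loccyc}'', but the substance is the same; one small caveat is that your word ``forced'' for $|\tilde\varphi_p(I_p)|\le 2$ is literally correct only at odd $p$ (where tame inertia is procyclic), whereas at $p=2$ it is a matter of \emph{choice}---but the explicit local solutions you constructed in the preceding paragraph already make exactly that choice, so the argument stands.
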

\begin{proof}
It suffices to note that the first and third group in condition 2) are split in the extension $2.S_6$ (and thus pose no local obstruction to the central embedding problem), whereas the second one lifts to an abelian group isomorphic to $C_2\times C_4$ (with the subgroup generated by the transposition still split); in case a prime $p$ has this decomposition group (and transposition inertia) in $\Omega/\mathbb{Q}$, there is no local obstruction to the embedding problem either, since the local extension is then isomorphic to the compositum of a totally ramified quadratic extension with the unramified quadratic extension, and the local embedding problem is solved by extending the quadratic unramified extension to the degree-$4$ one. The rest of the proof is simply application of Propositions \ref{lem:prime_kernel} and \ref{prop:serre}, as for Lemma \ref{lem:sqf-loccyc}. 
\end{proof}

In order to enforce conditions 1) and 2) of the above lemma, one may again construct geometric Galois realizations with inertia groups generated by transpositions, and this time such that the decomposition group at such branch points is contained in $\textrm{Sym}(\{1,2\}) \times \textrm{Alt}(\{3,4,5,6\})$; this is again a subgroup of index $2$ in the largest possible decomposition group $S_2\times S_4$ for a ``transposition branch point", and hence enforcing such a situation corresponds to a process of ``swallowing" discriminants just as in the previous section, which may again be performed using certain $4$-branch point families. So far, however, I have not succeeded in additionally controlling the ``small" (i.e., exceptional) primes for such families, and it may well be necessary to increase the branch point number further to achieve this.


\begin{thebibliography}{9} 
\bibitem{AC55} N.C.\ Ankeny, S.\ Chowla, \textit{On the divisibility of the class numbers of quadratic fields}. Pacific J.\ Math.\ 5 (3) (1955), 321--324.
%\bibitem{BSS} L.\ Bary-Soroker, T.\ Schlank, \textit{Sieves and the minimal ramification problem}. J.\ Inst.\ Math.\ Jussieu 19 (3) (2020), 919--945.
\bibitem{Beck} S.\ Beckmann, \textit{On extensions of number fields obtained by specializing branched coverings}. J.\ Reine Angew.\ Math. 419
(1991) 27--53.
\bibitem{Bharg} M.\ Bhargava, \textit{The geometric sieve and the density of squarefree values of polynomial discriminants and other invariant polynomials}. \texttt{http://arxiv.org/abs/1402.0031}.
\bibitem{Brink} D.\ Brink, \textit{Remark on infinite unramified extensions of number fields with class number one}. J.\ Number Theory 130 (2010), 304--306.
%\bibitem{CHKP} K.\ Chakraborty, A.\ Hoquie, Y.\ Kishi, P.P. Pandey, \textit{Divisibility of the class numbers of imaginary quadratic fields}. J.\ Number Theory 185 (2018), 339--348.
%\bibitem{GW} D.\ Gorenstein, J.H.\ Walter, \textit{The characterization of finite groups with dihedral Sylow $2$-subgroups. I}. J.\ Algebra 2 (1965), 85--151.
\bibitem{Hurw} A.\ Hurwitz, \textit{\"Uber tern\"are diophantische Gleichungen dritten Grades}. Vierteljahrschr.\ d.\ Naturf.\ Ges.\ in Z\"urich 62 (1917), 207--229.
\bibitem{Ked} K.\ Kedlaya, \textit{A construction of polynomials with squarefree discriminants}. Proc.\ Amer.\ Math.\ Soc.\ 140 (9) (2012), 3025--3033.
\bibitem{KK21} K.-S.\ Kim, J.\ K\"onig, \textit{On Galois extensions with prescribed decomposition groups}. J.\ Number Theory 220 (2021), 266--294. 
%\bibitem{KiMi} Y.\ Kishi, K.\ Miyake, \textit{Parametrization of the quadratic fields whose class numbers are divisible by three}. J.\ Number Theory 80 (2000), 209--217.
\bibitem{Koe21} J.\ K\"onig, \textit{Unramified extensions of quadratic number fields with certain perfect Galois groups}. To appear in Int.\ J.\ Number Theory. Preprint at  \texttt{https://arxiv.org/abs/2109.09646}.
\bibitem{KLN} J.\ K\"onig, F.\ Legrand, D.\ Neftin, \textit{On the local behavior of specializations of function field extensions}. Int.\ Math.\ Res.\ Not.\ IMRN Vol.\ 2019 (2019), 2951--2980.
\bibitem{KNS} J.\ K\"onig, D.\ Neftin, J.\ Sonn, \textit{Unramified extensions over low degree number fields}. J.\ Number Theory 212 (2020), 72--87.
\bibitem{KN21} J.\ K\"onig, D.\ Neftin, \textit{The local dimension of a finite group over a number field}. Trans.\ Amer.\ Math.\ Soc.\ 375(7) (2022), 4783--4808.
\bibitem{Lang} S.\ Lang, \textit{Algebraic Number Theory, 2nd edition}. Springer Verlag, 1994.
\bibitem{Maire} C.\ Maire, \textit{On infinite unramified extensions}. Pacific J.\ Math.\ 192(1) (2000), 135--142.
%\bibitem{Malle} G.\ Malle, \textit{Multi-parameter polynomials with given Galois group}. J.\ Symbol.\ Comput.\ 30 (2000), 717--731.
\bibitem{MM} G.\ Malle, B.H.\ Matzat, \textit{Inverse Galois Theory}. 2nd edition, Springer, 2018.
%\bibitem{Mestre} J.-F.\ Mestre,  \textit{Extensions r\'eguli\`eres de $Q(T)$ de groupe de Galois $\tilde{A}_n$}. J.\ Algebra 131 (1990), 483--495.
%\bibitem{NSW} J.\ Neukirch, A.\ Schmidt, K.\ Wingberg, \textit{Cohomology of Number Fields}. Springer Verlag, Berlin, 2000.
%\bibitem{SchS} M.\ Schacher, J.\ Sonn, \textit{Double covers of the symmetric groups as Galois groups over number fields}. J.\ Algebra 116 (1) (1988), 243--250.
\bibitem{Serre} J.-P.\ Serre, \textit{Topics In Galois Theory}. Research Notes in Math., vol.\ 1, Jones and Bartlett Publishers, Boston, MA, 1992.
%\bibitem{Serre2} J.-P.\ Serre, \textit{Modular forms of weight one and Galois representations}, Algebraic number fields: $L$-functions and Galois properties (Proc.\ Sympos., Univ. Durham, Durham,1975) (A.\ Fr\"ohlich, ed.), Academic Press, London,1977, pp.\ 193--268;; reprinted in Collected papers, vol.\ III, Springer-Verlag, New York and Berlin, 1986, pp.\ 292--367.
%\bibitem{Serre3} J.-P.\ Serre, \textit{L'invariant de Witt de la forme $Tr(x^2)$}. Comm.\ Math.\ Helv.\ 59 (1984), 651--676.
%\bibitem{Vila} N.\ Vila, \textit{On central extensions of $A_n$ as Galois groups over $\mathbb{Q}$}. Arch.\ Math.\ 44 (1985), 424--437.
\bibitem{Sonn} J.\ Sonn, \textit{$SL(2,5)$ and Frobenius Galois groups over $\mathbb{Q}$}. Can.\ J.\ Math.\ 32 (1980), 281--293.
\bibitem{Wood} M.\ Matchett Wood, \textit{Nonabelian Cohen-Lenstra moments}. Duke Math.\ J.\ 168(3) (2019), 377--427.
\bibitem{Yam70} Y.\ Yamamoto, \textit{On unramified Galois extensions of quadratic number fields}. Osaka J.\ Math.\ 7 (1) (1970), 57--76.
%\bibitem{Yam} K.\ Yamamura, \textit{Maximal unramified extensions of imaginary quadratic number fields of small conductors}. J.\ Th\'eor.\ Nombres Bord.\ 9 (2) (1997), 405--448.
\end{thebibliography}
\end{document}